\newcommand{\R}{\mathbb R}
\newcommand{\N}{\mathbb N}
\newcommand{\M}{\mathcal   M}
\newcommand{\K}{\mathcal K}
\newcommand{\Ho}{\mathcal{H}}
\newcommand{\Ro}{\mathcal R}
\newcommand{\Uo}{\mathcal{U}}
\newcommand{\Vo}{\mathcal{V}}
\newcommand{\rmd}{\mathrm d}
\newcommand{\coloneqq}{:=}
\newcommand{\edot}{\,\cdot\,}
\newcommand\abs[1]{\left\vert#1\right\vert}
\newcommand\sabs[1]{\lvert#1\rvert}
\newcommand\set[1]{\left\{#1\right\}}
\newcommand\mset[1]{\bigl\{#1\bigr\}}
\newcommand\sset[1]{\{#1\}}
\newcommand{\Om}{\Omega}
\newcommand{\om}{\omega}
\newcommand{\kl}[1]{\left(#1\right)}
\newcommand{\skl}[1]{(#1)}
\newcommand{\req}[1]{(\ref{eq:#1})}
\newtheorem{theorem}{Theorem}[section]
\newtheorem{lemma}[theorem]{Lemma}
\newtheorem{remark}[theorem]{Remark}
\numberwithin{equation}{section}
\numberwithin{figure}{section}
\numberwithin{theorem}{section}
\begin{document}

\begin{frontmatter}

\title{Inversion of  circular means  and the wave equation\\
on convex planar domains}

\author{Markus Haltmeier}

\address{Department of Mathematics\\
University of Innsbruck\\
Technikestra{\ss}e 21a, A-6020 Innsbruck\\
E-mail:
\href{mailto:markus.haltmeier@uibk.ac.at}
{\tt markus.haltmeier@uibk.ac.at}}

\begin{abstract}
We  study the problem of recovering the initial data
of the two dimensional wave equation  from values of its  solution on
the boundary $\partial \Om$ of a  smooth convex bounded domain $\Om \subset \R^2$.
As a main result we establish back-projection type inversion formulas
that  recover any initial data with support in $\Om$ modulo an
explicitly computed  smoothing integral operator $\K_\Om$.
For circular and elliptical domains the operator $\K_\Om$  is shown to
vanish identically and hence we establish exact inversion
formulas of the back-projection type in these cases.
Similar results are  obtained for recovering  a function
from its mean values over circles with centers on $\partial \Om$.
Both reconstruction problems  are, amongst others, essential
for the hybrid imaging modalities photoacoustic and thermoacoustic
tomography.
\end{abstract}

\begin{keyword}
Circular means \sep
spherical means \sep
reconstruction formula \sep
photoacoustic imaging \sep
thermoacoustic tomography \sep
Radon transform.

\medskip
\MSC
35L05 \sep
44A12 \sep
65R32 \sep
92C55.
\end{keyword}

\end{frontmatter}

\section{Introduction}
\label{sec:intro}

Let $\Om \subset \R^2$ be a convex bounded domain in the plane with smooth boundary $\partial \Om$.
Suppose that $f \colon  \R^2\to \R $ is a smooth function that vanishes outside $\Om$,
and denote by
$u\colon \R^2 \times \kl{0, \infty} \to \R$
the solution of the initial value problem
\begin{equation}  \label{eq:wave}
	\left\{
	\begin{aligned}
	\kl{\partial_{t} ^2  - \Delta_{x} }
	u\kl{x, t}
	&=
	0
	& \text{ for }
	\kl{x,t} \in
	\R^2 \times \kl{0, \infty}
	\\
	u\kl{x,0}
	&=
	f\kl{x}
	& \text{ for }
	x  \in \R^2
	\\
	\kl{\partial_{t} u}
	\kl{x,0}
	&=0
	&
	\text{ for }
	x  \in \R^2
	\end{aligned}
	\right.
	\,.
\end{equation}
In this paper we study the problem of recovering the unknown
initial data $f$  from the values  of the solution $u$ on the observation surface $\partial \Om \times \kl{0,\infty}$.
Because  the  solution of~\req{wave} can be  expressed in terms of
circular means (and vice versa), the  inversion of the wave equation is basically
equivalent to the problem of recovering the function $f$ from its averages over
circles with centers on $\partial \Om$.
Both reconstruction problems are essential for the novel  hybrid
imaging methods photoacoustic tomography (PAT)  and
thermoacoustic tomography (TAT).  The  standard setups in  PAT/TAT
using point-like detectors require the inversion of the wave equation in three spatial dimensions~\cite{KucKun08,XuWan06}.
The two dimensional version considered in this paper
arises in  a variant  of PAT/TAT that uses  linear integrating
detectors instead of point-like ones
(see~\cite{BurBauGruHalPal07,PalNusHalBur07b}).
Inversion of circular means and the  wave equation
is also relevant for other imaging modalities, such as
SONAR \cite{And88} or  ultrasound tomography  \cite{NorLin81}.

In the recent years, many reconstruction techniques for the wave inversion and the
inversion from circular  means (or spherical means in higher dimensions)
have been  developed.
These techniques can be classified in iterative reconstruction methods
(see \cite{PalNusHalBur07b,DeaNtzRaz12,DonGoeKun11,PalViaPraJac02,ZhaAnaPanWan05}),
model based time reversal (see \cite{BurMatHalPal07,FinPatRak04,HriKucNgu08,SteUhl09}),  Fourier domain algorithms
(see \cite{NorLin81,Hal09,HalSchBurNusPal07,HalSchZan09b,KoeFraNiePalWebFre01,XuXuWan02}), and algorithms based
on explicit reconstruction formulas  of the  back-projection
type (see \cite{And88,FinPatRak04,Faw85,FinHalRak07,Hal11b,Kun07a,Kun11,Nat12,Pal12,XuWan05}).
The back-projection approach is  particularly appealing
since it is theoretically exact, stable
with respect to data  and modeling imperfections, mathematically elegant,   and quite straightforward to
implement numerically. Until  recently, however, exact back-projection type reconstruction
formulas were only known for the  cases where $\partial\Omega$ is a  spherical, planar or cylindrical surface (in three spatial dimensions)  and a circle or a line  (in two  spatial dimensions). Many researchers even believed
that exact reconstruction  formulas of the back-projection type exist only in such cases.
Only very recently  the so called universal back-projection formula
(originally  introduced to PAT/TAT  in \cite{XuWan05})
has been shown in \cite{Nat12}  to provide theoretically exact reconstruction for ellipsoids in $\R^3$.
In the same  paper it has  further be shown that for general convex
domains in $\R^3$ the universal back-projection formula
is exact modulo an explicitly given smoothing integral operator.

In this paper we establish back-projection type reconstruction
formulas for the inversion of the two dimensional wave equation
on convex domains $\Om \subset \R^2$. As their counterparts in three spatial dimensions,
the  derived formulas provide exact reconstruction  modulo an explicitly
computed smoothing integral operator $\K_\Om$.
We further show that for elliptical and circular  domains
the  operator $\K_\Om$ vanishes and
therefore we obtain exact back-projection type reconstruction
formulas in these cases. The same type of results are derived for reconstructing a function from its mean values over circles with centers on the boundary of a convex domain in the plane.
Note that our  reconstruction formulas as well as the  proofs
differ from the ones given in  \cite{Nat12}.
This  partially accounts for the fact that  the three dimensional wave
equation satisfies a  strong form of Huygens principle, whereas the
two  dimensional wave equation does not.

\subsection{Notation}
\label{sec:not}

Before presenting our main results we introduce some  notation that
will be used throughout this article.

For any $k \in \N \cup \set{\infty}$, we denote by $C_c^k\kl{\Om}$  the set of all
$k$-times continuously differentiable functions $f \colon \R^2 \to \R$  which have support in $\Om$.
For given   $f \in C_c^\infty  \kl{\Om}$ we denote  by
$\Uo f \colon \R^2 \times \kl{0, \infty} \to \R$ the solution of  the wave equation \req{wave} with initial data
$f$. Further, for  some integrable function $f \colon \R^2 \to \R$, the circular mean transform $\M f\colon \R^2 \times \kl{0,\infty} \to \R$ is defined by
\begin{equation} \label{eq:means}
	\kl{\M f} \kl{x,r}
	:= \frac{1}{2 \pi}
	\int_{S^1} f \kl{x + r \omega}
	\rmd \om
	\qquad
	\text{ for }
	\kl{x, r}  \in \R^2 \times \kl{0, \infty} \,.
\end{equation}
Here and below $S^1 \coloneqq \set{x \in \R^2: \abs{x} = 1}$ denotes the unit sphere in $\R^2$ with $\lvert x \rvert $ being   the   Euclidian norm of
$x \in \R^2$. The corresponding inner product of two points $x_0, x_1 \in \R^2$ will be denoted by
$x_0 \cdot x_1$. Similarly, the (classical) Radon transform $\Ro f \colon S^1 \times \R \to \R$
is defined by
\begin{equation*} 
\kl{\Ro f} \kl{n,a}
:=
\int_{\R}  f\kl{ a n  +  b n^\bot }
\rmd b
\qquad
\text{ for  }  \; \kl{n, a} \in S^{1} \times \R \,,
\end{equation*}
where $ n^\bot \in S^1 $ denotes  a unit vector orthogonal to $n$.
The derivative of a function  $\phi \colon S^{1} \times \R \to \R$
with respect to the   second variable  will be  denoted by
$\kl{\partial_a \phi} \kl{n, a}$, and   $\kl{\Ho_a  \phi} \kl{n, a} $
will be used to denote the Hilbert transform of $\phi$ in the second
argument, defined  as the distributional convolution with $1/\skl{\pi a}$.

We further write  $\chi_\Om \colon \R^2 \to \R$ for  the  characteristic function
of the domain $\Om$  (taking the value  one inside $\Om$  and zero outside) and set
\begin{equation} \label{eq:hatns}
\hat n \kl{x_1,x_0}
\coloneqq  \frac{x_1-x_0}{\abs{x_1-x_0}}
\,, \;\;
\hat a \kl{x_1,x_0}
\coloneqq
\frac{1}{2} \; \frac{\abs{x_1}^2 - \abs{x_0}^2}{\abs{x_1-x_0}}
\qquad \text{ for  }
\;
x_1 \neq x_0 \in \R^2 \,.
\end{equation}
Finally,  for any $f \in C_c^\infty  \kl{\Om}$ and any $x_0 \in \Om$, we define
\begin{equation}\label{eq:kern}
	\kl{\K_\Om f} \kl{x_0}
	\coloneqq
	\frac{1}{8\pi}
	\int_{\Omega}
	f\kl{x_1}  \frac{ \kl{ \partial_a^2 \Ho_a \Ro \chi_\Om} 	\kl{\hat n\kl{x_1,x_0},  \hat a\kl{x_1,x_0}}}
	{\abs{x_1-x_0}}  \, \rmd x_1 \,.
\end{equation}
It can be readily verified that the line
$\ell\kl{x_1,x_0} = \sset{ x \in \R^2 : \hat n\kl{x_1,x_0} \cdot x   =\hat a\kl{x_1,x_0}}$ consists of all points
having the same distance between $x_0$ and
$x_1$; compare with Figure~\ref{fig:geometry}.

\begin{psfrags}
\psfrag{a}{\scriptsize $x_0$}
\psfrag{b}{\scriptsize $x_1$}
\psfrag{s}{\scriptsize $\hat a \kl{x_1,x_0}$}
\psfrag{O}{\scriptsize domain $\Omega$}
\psfrag{S}{\scriptsize boundary  $\partial \Omega$}
\psfrag{m}{\scriptsize$\kl{x_1+x_0}/2$}
\psfrag{E}{\scriptsize$ \ell\kl{x_1,x_0}$}
\begin{figure}
\centering
\includegraphics[width=0.5\textwidth]{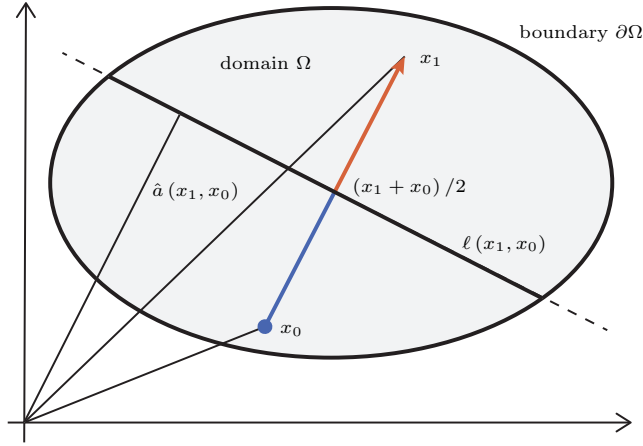}
\caption{Let $\ell\kl{x_1,x_0}$ denote  the line  of all points having
the same distance between $x_1$ and $x_0$.
Then $\hat n\kl{x_1,x_0} =  \skl{x_1 - x_0}/ \sabs{x_1-x_0}$ is the normal vector to
$\ell\kl{x_1,x_0}$   and $\hat a \kl{x_1,x_0} = \skl{\sabs{x_1}^2 - \sabs{x_0}^2}/ \kl{2 \sabs{x_1 -x_0}}$
is the oriented distance of the line $\ell\kl{x_1,x_0}$ from the origin.\label{fig:geometry}
}
\end{figure}
\end{psfrags}

\subsection{Main results}
\label{sec:main}

Our first pair of results states that a back-projection type  inversion
formula  applied to the solution  of  the wave equation \req{wave} recovers
any initial data
$f$ modulo the term $\K_\Om f$.

\begin{theorem}[Inversion of the wave equation]\label{thm:wave}
Let $\Om \subset \R^2$ be a convex bounded  domain with smooth boundary $\partial \Om$.
Then,  for  every function  $f \in C_c^\infty \kl{\Om}$ and every reconstruction point  $x_0 \in \Om$, we have
\begin{align} \label{eq:inv-wave-a}
	f\kl{x_0}
	-
	\kl{\K_\Om f} \kl{x_0}
	&=
	\frac{1}{\pi} \,
	\nabla_{x_0} \cdot \int_{\partial \Om}
	\nu_x
	\kl{
	\int_{\abs{x-x_0}}^\infty
	\frac{\kl{\Uo f}\kl{x,t}}
	{ \sqrt{t^2 - \abs{x-x_0}^2} }
	\, \rmd t
	}
	 \rmd s\kl{x} \,,
	  \\
	  f\kl{x_0}
	  -
	 \kl{\K_\Om f} \kl{x_0}
	  &=  \label{eq:inv-wave-b}
	\frac{1}{\pi} \,
	\int_{\partial \Om}
	\nu_x \cdot \kl{x_0-x}
	\kl{
	\int_{\abs{x-x_0}}^\infty
	\frac{ \kl{\partial_{t} t^{-1} \Uo f}
	\kl{x, t} } { \sqrt{t^2-\abs{x-x_0}^2} }
	\, \rmd t
	}
	\rmd s\kl{x}
	\,.
\end{align}
Here $\Uo f$ is the solution of \req{wave}, $\K_\Om f$ is defined by  \req{kern},
$\rmd\ell$ is the usual arc length measure on $\partial \Om$,
$\nu_x$ denotes the outwards pointing unit normal to $\partial \Omega$, and $\nabla_{x_0} \cdot$ denotes the divergence with respect to $x_0$.
\end{theorem}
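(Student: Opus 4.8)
The plan is to convert the boundary data into circular means, collapse the back-projection into an integral against a single reconstruction kernel $K(x_0,x_1)$, and then split $K$ into a distributional diagonal part producing $f(x_0)$ and a smooth remainder producing $\kl{\K_\Om f}(x_0)$. I would prove \eqref{eq:inv-wave-a} in detail; formula \eqref{eq:inv-wave-b} is obtained by the same scheme, differing only in how the $x_0$-derivative is organized (expanding $\nabla_{x_0}\cdot$ in \eqref{eq:inv-wave-a}, noting $\nu_x$ is independent of $x_0$, and trading the resulting radial derivative of the Abel integral for the time derivative $\partial_t t^{-1}\Uo f$).

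First I would use the two-dimensional Poisson representation
\[
\kl{\Uo f}\kl{x,t} = \partial_t\int_0^t \frac{r\,\kl{\M f}\kl{x,r}}{\sqrt{t^2-r^2}}\,\rmd r
\]
to rewrite the inner time integral of \eqref{eq:inv-wave-a}. Substituting this and applying Fubini, the two Abel kernels $1/\sqrt{t^2-r^2}$ and $1/\sqrt{t^2-\abs{x-x_0}^2}$ combine into a single one-dimensional kernel $W$ depending only on the radius and on $\abs{x-x_0}$; writing the circular mean as an average of $f$ over circles then yields a representation of the form
\[
\int_{\abs{x-x_0}}^\infty \frac{\kl{\Uo f}\kl{x,t}}{\sqrt{t^2-\abs{x-x_0}^2}}\,\rmd t = \frac{1}{2\pi}\int_\Om f\kl{x_1}\,\frac{W\kl{\abs{x-x_1},\abs{x-x_0}}}{\abs{x-x_1}}\,\rmd x_1 .
\]
The decisive structural feature is that $W(\rho,r)$ is smooth off the diagonal $\rho=r$ but develops a logarithmic singularity as $\rho\to r$, and this diagonal is exactly the locus where the center $x$ is equidistant from $x_0$ and $x_1$.

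Next I would exploit convexity. For fixed $x_0\in\Om$ the map $x\mapsto \hat n(x,x_0)=(x-x_0)/\abs{x-x_0}$ is a diffeomorphism $\partial\Om\to S^1$, with $x=x_0+\rho(n)\,n$ and an explicit Jacobian relating $\rmd s(x)$ to $\rmd n$ through $\nu_x\cdot n$. After inserting the display above, interchanging the order of integration so that $x_1$ is integrated last, and carrying out $\tfrac1\pi\nabla_{x_0}\cdot(\,\cdot\,)$, the right-hand side of \eqref{eq:inv-wave-a} takes the form $\int_\Om f(x_1)\,K(x_0,x_1)\,\rmd x_1$ for an explicit kernel $K$. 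The geometric heart of the argument is that a boundary point $x\in\partial\Om$ lies on a circle through both $x_0$ and $x_1$ precisely when $x$ lies on the perpendicular bisector $\ell(x_1,x_0)$; hence the boundary integral of the near-diagonal part of $W$ localizes to $\partial\Om\cap\ell(x_1,x_0)$ and produces the chord data $\kl{\Ro\chi_\Om}\kl{\hat n(x_1,x_0),\hat a(x_1,x_0)}$, while the filter $\partial_a^2\Ho_a$ and the principal-value Hilbert transform emerge from differentiating the logarithmic singularity of $W$ together with the $x_0$-divergence.

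The main obstacle is the final separation of $K$ into its distributional diagonal part and its smooth remainder. I expect the diagonal contribution, after the change of variables, to reproduce the classical filtered back-projection inversion of the Radon transform of $f$ at $x_0$, hence the term $f(x_0)$, and the complementary regular contribution to equal $\kl{\K_\Om f}(x_0)$ as in \eqref{eq:kern}. Making this matching exact — producing precisely the filter $\partial_a^2\Ho_a$, the correct principal-value reading of $\Ho_a$, and the constant $1/(8\pi)$ against the overall factor $1/\pi$ — is the delicate part, since each of these features arises from the combined effect of the Abel kernels, the weight $1/\abs{x-x_1}$, the boundary Jacobian, and the $x_0$-divergence. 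The smoothing property of $\K_\Om$, and its vanishing for discs and ellipses, would then follow from the regularity of the chord-length function $\Ro\chi_\Om$ for such domains.
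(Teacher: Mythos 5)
Your first reduction is sound and in fact coincides with computations the paper performs elsewhere: combining the two Abel kernels via \eqref{eq:sol-UU2} and Fubini does give
\begin{equation*}
\int_{\abs{x-x_0}}^\infty\frac{\kl{\Uo f}\kl{x,t}}{\sqrt{t^2-\abs{x-x_0}^2}}\,\rmd t
=-\frac{1}{2}\int_0^\infty\kl{\partial_r\M f}\kl{x,r}\ln\abs{r^2-\abs{x-x_0}^2}\,\rmd r
=\frac{1}{2\pi}\,\mathrm{P.V.}\!\int_\Om\frac{f\kl{x_1}}{\abs{x-x_1}^2-\abs{x-x_0}^2}\,\rmd x_1,
\end{equation*}
which is your kernel $W$. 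But from there your plan has two genuine gaps, and they are exactly the two things the theorem actually asserts. First, the extraction of $f\kl{x_0}$ is only asserted (``I expect the diagonal contribution \dots to reproduce the classical filtered back-projection''). Showing that the distributional diagonal part of $K\kl{x_0,x_1}$ is exactly $\delta\kl{x_1-x_0}$ with coefficient one is the heart of the proof; it requires a delicate limiting analysis near $x_1=x_0$, where $\hat n\kl{x_1,x_0}$ is discontinuous and $\hat a$ and $1/\abs{x_1-x_0}$ blow up, and it is not literally the classical Radon FBP identity. The paper sidesteps this entirely by a duality argument: it pairs $\Uo f$ with a second wave solution $\Vo g$ (zero initial value, initial velocity $g$), applies Green's second identity in space and integrates by parts in time, so that $\int_\Om fg$ drops out exactly from the initial conditions at $t=0$ (Theorem~\ref{thm:main-aux}) rather than from a singular-kernel computation. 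Your proposal contains no substitute for this mechanism.

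Second, your route to the remainder term $\K_\Om f$ points in the wrong direction geometrically. Parametrizing $\partial\Om$ by the direction $\hat n\kl{x,x_0}$ with a Jacobian $\nu_x\cdot n$ cannot produce $\Ro\chi_\Om$, which is an \emph{area} slicing of $\Om$; the characteristic function enters only after the boundary integral is converted back to a domain integral by the divergence theorem (equivalently, via the translation-invariance relations $\kl{\nabla_{x_0}+\nabla_{x_1}}\hat n=0$, $\kl{\nabla_{x_0}+\nabla_{x_1}}\hat a=\hat n$ that the paper exploits). Moreover, the claim that the singular boundary integral ``localizes to $\partial\Om\cap\ell\kl{x_1,x_0}$ and produces the chord data $\kl{\Ro\chi_\Om}\kl{\hat n,\hat a}$'' is not correct as stated: the kernel in \eqref{eq:kern} is $\partial_a^2\Ho_a\Ro\chi_\Om$, and the Hilbert transform is a nonlocal principal-value average over \emph{all} chords parallel to the bisector, obtained by slicing $\int_\Om\kl{\hat n\cdot x-\hat a}^{-1}\rmd x$ into lines (this is the content of Lemma~\ref{lem:intphi}). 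So while the overall shape of your decomposition matches the answer, neither the $f\kl{x_0}$ term nor the precise form of $\K_\Om$ (filter, principal value, and the constant $1/\kl{8\pi}$) is actually derived, and the mechanisms you propose for both would need to be replaced.
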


\begin{proof}
See Section \ref{sec:wave}.
\end{proof}

We also prove the following  corresponding formulas  for recovering a function
from its mean values over circles centered  on  the boundary $\partial\Om$.

\begin{theorem}[Inversion from circular means]\label{thm:means}
Let $\Om \subset \R^2$ be a convex bounded  domain with smooth boundary $\partial \Om$.
Then,  for  every $f \in C_c^\infty \kl{\Om}$ and every  $x_0 \in \Om$, we have
\begin{align} \label{eq:inv-means-a}
	f\kl{x_0}
	 -
	 \kl{\K_\Om  f}\kl{x_0}
	 &=
	 \frac{1}{\pi} \,
	 \nabla_{x_0} \cdot
	 \int_{\partial \Om}
	 \nu_x
	 \kl{
	 \int_{0}^\infty \frac{r
	 \kl{\M f} \kl{x, r}}
	 {r^2 - \abs{x-x_0}^2}
	 \, \rmd r
	 }
	 \rmd s\kl{x}  \,,
	  \\  \label{eq:inv-means-b}
	f\kl{x_0}
	-
	\kl{\K_\Om  f}\kl{x_0}
	 &=
	\frac{1}{\pi} \,
	\int_{\partial \Om} \nu_x \cdot  \kl{x_0-x}
	\kl{
	\int_{0}^\infty
	\frac{ \kl{\partial_r \M f} \kl{x, r }}
	{r^2 - \abs{x-x_0}^2}
	\, \rmd r
	}
	\rmd s\kl{x}
	\,.
\end{align}
Here  both inner integrals have to be  taken in the principal value
sense,  $\M f$ is the circular mean transform defined by \req{means},
and $\K_\Om$,  $\nu_x$,  $\rmd\ell$ and $\nabla_{x_0}$ are as in
Theorem \ref{thm:wave}.
\end{theorem}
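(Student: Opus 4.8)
The plan is to deduce Theorem~\ref{thm:means} from Theorem~\ref{thm:wave} by using the explicit relation between the solution of the wave equation~\req{wave} and the circular mean transform~\req{means}. In two spatial dimensions the method of descent expresses the wave solution as an Abel transform of the circular means: for every $f\in C_c^\infty\kl{\Om}$, every $x\in\R^2$ and every $t>0$,
\[
\kl{\Uo f}\kl{x,t}
=
\partial_t \int_0^t \frac{r\,\kl{\M f}\kl{x,r}}{\sqrt{t^2-r^2}}\,\rmd r .
\]
This follows from Poisson's solution formula for~\req{wave} after passing to polar coordinates and inserting the definition~\req{means}. The first step is to substitute this identity into the right hand sides of~\req{inv-wave-a} and~\req{inv-wave-b}. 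Since the smoothing operator $\K_\Om$ of~\req{kern} and the geometric weights $\nu_x$ and $\nu_x\cdot\kl{x_0-x}$ appear unchanged in both theorems, it suffices to verify that, for each fixed $x\in\partial\Om$, the inner $t$-integrals of Theorem~\ref{thm:wave} agree with the inner principal value $r$-integrals of Theorem~\ref{thm:means}.

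The heart of the matter is therefore the single identity
\[
\int_{\rho}^\infty \frac{\kl{\Uo f}\kl{x,t}}{\sqrt{t^2-\rho^2}}\,\rmd t
=
\mathrm{p.v.}\int_0^\infty \frac{r\,\kl{\M f}\kl{x,r}}{r^2-\rho^2}\,\rmd r ,
\qquad \rho\coloneqq\abs{x-x_0}.
\]
To prove it I would insert the descent formula for $\Uo f$ and change variables $\tau=t^2$, $s=r^2$, which turns both half-order weights $\skl{t^2-r^2}^{-1/2}$ and $\skl{t^2-\rho^2}^{-1/2}$ into standard Abel kernels. Moving the $t$-derivative from $\Uo f$ onto $\M f$ produces a boundary term proportional to $\kl{\M f}\kl{x,0}=f\kl{x}$, which vanishes because $x\in\partial\Om$ lies outside the support of $f$; this is precisely what makes the formula valid for boundary centres. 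After an application of Fubini the inner variable integral is elementary, and the composition of the two Abel kernels collapses to the Cauchy kernel $\skl{r^2-\rho^2}^{-1}$, yielding the stated principal value integral.

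It remains to pass from~\req{inv-means-a} to~\req{inv-means-b}, which can be done directly, in exact analogy with the passage from~\req{inv-wave-a} to~\req{inv-wave-b}. The inner integral in~\req{inv-means-a} depends on the reconstruction point only through $\rho=\abs{x-x_0}$, so carrying out the divergence and using $\nabla_{x_0}\rho=\skl{x_0-x}/\rho$ converts $\nabla_{x_0}\cdot\bigl(\nu_x(\,\cdot\,)\bigr)$ into $\nu_x\cdot\skl{x_0-x}$ multiplied by $\rho^{-1}\partial_\rho$ of the inner integral. An integration by parts in $r$---whose boundary contributions again vanish since $f$ is compactly supported in $\Om$ and $\kl{\M f}\kl{x,0}=0$---then replaces this $\rho$-derivative by the integrand $\kl{\partial_r\M f}\kl{x,r}\big/\skl{r^2-\rho^2}$ appearing in~\req{inv-means-b}.

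The step I expect to be the main obstacle is the careful control of convergence and of the principal value in the central Abel identity. Because $\kl{\M f}\kl{x,\,\cdot\,}$ has compact support the outer $t$-integral converges absolutely, but a naive interchange of integration order produces inner integrals with a logarithmic divergence at infinity; these divergences cancel only after invoking $\int_0^\infty \kl{\partial_r\M f}\kl{x,r}\,\rmd r=0$, valid for $x\in\partial\Om$. Keeping track of this cancellation, of the finite part generated at $r=\rho$, and of the exact constant is the delicate part, whereas the reduction to Theorem~\ref{thm:wave} is conceptually straightforward.
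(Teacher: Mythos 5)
Your proposal is correct and follows essentially the same route as the paper: substitute the descent (Abel) representation of $\Uo f$ into \req{inv-wave-a}, apply Fubini, evaluate the inner $t$-integral (the paper's Equation \req{intgreen}) to obtain the logarithmic kernel $-\tfrac12\ln\abs{r^2-\abs{x-x_0}^2}$ after the divergent $\ln T$ term cancels via $\kl{\M f}\kl{x,0}=f\kl{x}=0$ for $x\in\partial\Om$, and then integrate by parts to reach the principal value Cauchy kernel; formula \req{inv-means-b} is likewise obtained from \req{inv-means-a} by an integration by parts in $r$ combined with the chain rule in $x_0$. The convergence and cancellation issues you single out as the delicate points are exactly the ones the paper's proof handles.
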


\begin{proof}
See Section~\ref{sec:means}.
\end{proof}

\begin{remark}[Smoothing effect of $\K_\Om$] \label{rem:smooth}
If $\Om \subset \R^2$ is smooth and convex, then the Radon transform
of $\chi_\Om$  is smooth except for such pairs of $\kl{n, a} \in S^1 \times \R$
where the line
$\set{x \in \R^2 : n \cdot x =  a}$  is tangential to the boundary
$\partial \Om$.
Notice further, that for $x_1 \neq x_0 \in \Om$ the line  $\ell\kl{x_1,x_0}$
is never tangential to $\partial \Om$; see  Figure~\ref{fig:geometry}. Since the operator $\partial_a^2 \Ho_a$ preserves the locations of the
singularities of $\Ro \chi_\Om$ this shows that the kernel in \req{kern} has at most a weak singularity proportional to $1/\abs{x_1-x_0}$ and hence is
at least smoothing by one degree.
\end{remark}

For special domains, the integral operator $\K_\Om$
may vanish, in which cases Theorems \ref{thm:wave} and
\ref{thm:means} provide exact reconstruction formulas
for the  inversion of the wave equation and the inversion from circular
means, respectively.
We verify that this is indeed the  case for circular and elliptical domains.

\begin{theorem}[Exact inversion for circular and elliptical domains] \label{thm:ell}
Let $\Om \subset\R^2$ be a circular or  elliptical domain and let
$f\colon \R^2 \to \R $ be  a $C^\infty$ function with support in
$\Om$.
Then  $\K_\Om f$ vanishes identically on $\Om$.
In particular,  the following hold:
\begin{enumerate}[itemsep=0em,topsep=0em,label=(\alph*)]
\item \label{it:ell1}
The function $f $  can be recovered  from the solution
of the wave equation \req{wave}  by means of either of the
following formulas:
\begin{align}
\label{eq:ell-wave-a}
f\kl{x_0}
& =
\frac{1}{\pi} \,
\nabla_{x_0} \cdot \int_{\partial \Om} \nu_x
\kl{
\int_{\abs{x-x_0}}^\infty
\frac{ \kl{\Uo f} \kl{x, t} }
{ \sqrt{t^2-\abs{x-x_0}^2} }
\, \rmd t }
\rmd s\kl{x} \,,
\\ \label{eq:ell-wave-b}
f\kl{x_0}
&=
	\frac{1}{\pi} \,
	\int_{\partial \Om}
	\nu_x \cdot \kl{x_0-x}
	\kl{
	\int_{\abs{x-x_0}}^\infty
	\frac{ \kl{\partial_{t} t^{-1} \Uo f}
	\kl{x, t} } { \sqrt{t^2-\abs{x-x_0}^2} }
	\, \rmd t
	}
	\rmd s\kl{x}
	\,.
\end{align}
\item \label{it:ell2}
The function $f$ can be recovered  from the circular averages
$\M f$ defined in \req{means} by means of either of the
following formulas:
\begin{align}\label{eq:ell-means-a}
f\kl{x_0}
& =
\frac{1}{\pi} \,
\nabla_{x_0}
\cdot
\int_{\partial \Om}
\nu_x
\kl{
\int_{0}^\infty \frac{r \kl{\M f}
\kl{x, r}}
{r^2 - \abs{x-x_0}^2}
\, \rmd r
}
\rmd s\kl{x} \,,
 \\ \label{eq:ell-means-b}
f\kl{x_0}
&=
\frac{1}{\pi} \,
	\int_{\partial \Om} \nu_x \cdot
	\kl{x_0-x}
	\kl{
	\int_{0}^\infty
	\frac{ \kl{\partial_r \M f} \kl{x, r }}
	{r^2 - \abs{x-x_0}^2}
	\, \rmd r
	}
	\rmd s\kl{x}
\,.
\end{align}
Here both inner integrals have to be  taken in the principal value
sense.
\end{enumerate}
\end{theorem}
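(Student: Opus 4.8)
The plan is to isolate the single geometric–analytic fact that the kernel of $\K_\Om$ in \req{kern} vanishes pointwise, and then to read off \ref{it:ell1} and \ref{it:ell2} directly from Theorems~\ref{thm:wave} and~\ref{thm:means}. Since $f$ is arbitrary, it suffices to prove that the kernel factor $\kl{\partial_a^2 \Ho_a \Ro \chi_\Om}\kl{\hat n\kl{x_1,x_0},\hat a\kl{x_1,x_0}}$ vanishes for all $x_0 \neq x_1 \in \Om$; the weight $1/\abs{x_1-x_0}$ is then integrated against a function that is identically zero. The crucial geometric observation, already recorded in Remark~\ref{rem:smooth}, is that for $x_0,x_1\in\Om$ the line $\ell\kl{x_1,x_0}$ passes through the midpoint $\kl{x_0+x_1}/2\in\Om$, hence meets the interior of $\Om$ and is never tangent to $\partial\Om$. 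In Radon coordinates this means $a=\hat a\kl{x_1,x_0}$ lies strictly inside the $a$-support of $\Ro\chi_\Om\kl{\hat n\kl{x_1,x_0},\edot}$, i.e. strictly between the two tangent values. I therefore only ever need to know $\Ho_a\Ro\chi_\Om$ on the open interior of this support.

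The second step is to pin down the profile of $\Ro\chi_\Om\kl{n,\edot}$ for fixed $n$. Writing the elliptical domain as an affine image $\Om = M\kl{D}+c$ of the unit disk $D$, I would substitute $y=M^{-1}\kl{x-c}$ in the line integral defining $\Ro\chi_\Om$; the chord of $\Om$ cut by $\set{x:n\cdot x=a}$ is then carried to the chord of $D$ cut by a translated line, and computing the length of the latter (the positive root of a quadratic in the line parameter) shows that, for each fixed $n$,
\begin{equation*}
\kl{\Ro\chi_\Om}\kl{n,a} = C\kl{n}\,\sqrt{\rho\kl{n}^2-\kl{a-a_0\kl{n}}^2}
\qquad\text{for } \abs{a-a_0\kl{n}}<\rho\kl{n},
\end{equation*}
and $0$ otherwise, with $C\kl{n},\rho\kl{n}>0$ and $a_0\kl{n}=n\cdot c$ (the latter forced by symmetry of the ellipse about its center). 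For a circular domain this is immediate, since $\Ro\chi_\Om\kl{n,a}$ is just the chord length of a disk. Thus along every direction the Radon transform of an ellipse carries the universal semicircular profile of a disk, up to an $n$-dependent amplitude, center and radius.

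The third step exploits the classical finite Hilbert transform identity
\begin{equation*}
\frac{1}{\pi}\,\mathrm{p.v.}\int_{-\rho}^{\rho}\frac{\sqrt{\rho^2-t^2}}{a-t}\,\rmd t = a
\qquad\text{for } \abs{a}<\rho.
\end{equation*}
Because $\Ho_a$ is a convolution and hence commutes with translations in $a$, combining this identity with the profile above gives $\kl{\Ho_a\Ro\chi_\Om}\kl{n,a}=C\kl{n}\,\kl{a-a_0\kl{n}}$ for every interior $a$, i.e. an affine function of $a$ on the entire open interior of the support (the precise value of $C\kl{n}$ being immaterial). Consequently $\kl{\partial_a^2\Ho_a\Ro\chi_\Om}\kl{n,a}=0$ there, the classical and distributional second derivatives agreeing on the open interior where the function is smooth. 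Evaluating at the interior point $a=\hat a\kl{x_1,x_0}$ supplied by the first step yields a vanishing kernel, whence $\K_\Om f\equiv 0$ on $\Om$. Substituting $\K_\Om f=0$ into \req{inv-wave-a} and \req{inv-wave-b} proves \ref{it:ell1}, and into \req{inv-means-a} and \req{inv-means-b} proves \ref{it:ell2}.

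I expect the main obstacle to be the second step: verifying rigorously that the semicircular profile is preserved under the affine substitution, in particular tracking the length distortion so that the discriminant of the relevant quadratic in $a$ comes out as a genuine downward parabola $\kappa\,\kl{\rho\kl{n}^2-\kl{a-a_0\kl{n}}^2}$ with the claimed center $a_0\kl{n}=n\cdot c$. A secondary point that must be handled with care is that $\Ho_a\Ro\chi_\Om$ is only \emph{piecewise} affine, ceasing to be linear outside the support where the tangent singularities reside; this is precisely why the first step is essential, since it confines every evaluation to the open interior on which the affine formula, and hence the vanishing of $\partial_a^2$, is valid.
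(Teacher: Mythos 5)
Your proposal is correct and follows essentially the same route as the paper: reduce to showing $\kl{\partial_a^2\Ho_a\Ro\chi_\Om}\kl{\hat n,\hat a}=0$, note that $\hat a\kl{x_1,x_0}$ lies strictly inside the support because $\ell\kl{x_1,x_0}$ passes through the interior point $\kl{x_0+x_1}/2$, and exploit that the Radon profile of an ellipse in each direction is a dilated and translated semicircle whose finite Hilbert transform is affine on the open interior. The only cosmetic difference is that you apply the explicit finite Hilbert transform identity to the rescaled profile directly, whereas the paper first records the disc case from a table and then transports the operator $\partial_a^2\Ho_a\Ro$ through the affine change of variables.
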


\begin{proof}
See Section \ref{sec:ell}.
\end{proof}

\subsection{Prior work and innovations}

In the case that $\Om$ is  a spherical domain in $d \geq  2$
dimension, various exact back-projection type inversion formulas
 for the inversion of the wave equation and  the inversion from spherical
 means have been derived in \cite {BurBauGruHalPal07,FinPatRak04,FinHalRak07,Kun07a,XuWan05}.
 In particular, the inversion formula~\cite{BurBauGruHalPal07}
 coincides with our  formula~\req{ell-wave-b} for the wave inversion, and
 the formula of \cite{Kun07a} for the inversion from circular means
 (if  rewritten as in \cite{AgrKucKun09};  see \cite{FinRak09} for a different derivation)  coincides with our formula~\req{ell-means-a}.
 However, in \cite{BurBauGruHalPal07,Kun07a} these  formulas
 are neither shown be exact for elliptical  domains, nor are  they
 investigated for more general  domains.

Inversion formulas for the spherical  mean transform on elliptical
domains have been derived recently in~\cite{Pal12}.
The methods and results there are  different from ours and
no statements are made for general convex domains.
Even the reconstruction formula of \cite{Pal12} for ellipses
differs from each of our formulas \req{ell-means-a}, \req{ell-means-b}.
Our  results are more closely related to the one of~\cite{Nat12}, where
corresponding results have derived for convex domains in
$\R^3$.  Note further that  in  three spatial dimensions a statement
similar to our  Theorem~\ref{thm:wave} is also present in~\cite{XuWan05}
(where, however, no explicit expression for the operator $\K_\Om$ has been derived).
Anyway, none of the papers \cite{Nat12,XuWan05} considers
the two-dimensional case.

\subsection{Outline}

The following sections are devoted to the proofs of the
theorems presented  in  Section \ref{sec:main}. In Section  \ref{sec:aux}
we derive auxiliary statements that we require for the proofs  of these
results.
In Section~\ref{sec:wave} we then derive the
formulas for the wave inversion claimed in Theorem~\ref{thm:wave}
and in Section~\ref{sec:means} we  establish the corresponding results for the inversion from circular
means.
The cases of circular and elliptical
domains  are considered in Section~\ref{sec:ell},
where we show that the operator   $\K_\Om$ vanishes in these
cases and hence we establish the exact inversion
formulas claimed in Theorem~\ref{thm:ell}.
The paper concludes with  a  short discussion in Section~\ref{sec:conclusion}.

\section{Auxiliary results}
\label{sec:aux}

For the following considerations recall the definitions of the circular mean transform $\M$, the Radon transform $\Ro$, and the operator $\Uo$ that maps
any initial data $f\in C_c^\infty \kl{\Om}$ to the solution of \req{wave}; see Section~\ref{sec:not}.
We will further make use of the operator  $\Vo$,
which maps any $f \in C_c^\infty \kl{\Om}$ to the solution
$v \colon \R^2\times \kl{0, \infty} \to \R $  of the initial value problem
\begin{equation}  \label{eq:wave2}
	\left\{
	 \begin{aligned}
	\kl{\partial_{t} ^2  - \Delta_{x} }
	v\kl{x, t}
	&=
	0
	& \text{ for }
	\kl{x,t} \in
	\R^2 \times \kl{0, \infty}
	\\
	v\kl{x,0}
	&=
	0
	& \text{ for }
	x  \in \R^2
	\\
	\kl{\partial_{t} v}
	\kl{x,0}
	&
	= f \kl{x}
	&
	\text{ for }
	x  \in \R^2
\end{aligned} \;.
\right.
\end{equation}
One easily recognizes that $\Uo$ and $\Vo$ are related by
the identity $\Uo f =  \partial_{t} \Vo f$. Indeed, if $v$  solves \req{wave2}, then by definition  $u = \partial_{t}v$  satisfies
the wave equation $\skl{\partial_{t} ^2  - \Delta_{x} } u  = 0 $ and the initial condition $u\kl{x,0} = f\kl{0}$. Further, the wave equation yields $\skl{\partial_{t}u}\kl{x,0} = \skl{\partial_{t}^2 v}\kl{x,0}=  \kl{\Delta_{x}v}\kl{x,0} = 0$ which shows that $u$
is a solution of  \req{wave}.

Next we proof  a simple integral identity that we require
for the following considerations.

\begin{lemma}\label{lem:intphi}
Suppose that $f \in C_c^\infty\kl{\Om}$ and that  $\phi \colon \R^2 \to \R$ is continuously differentiable on $\bar \Omega$ and
vanishes outside $\bar \Omega$.
Then, for every $x_1 \in \Om$, we have
\begin{multline}\label{eq:intphi}
\int_{\R^2}
\phi\kl{x}
\kl{\int_{0}^\infty
\kl{\partial_{r_0} \M f}\kl{x, r_0}
\ln \abs{r_0^2 - \abs{x-x_1}^2}
\rmd r_0}
\rmd x
\\=
\frac{1}{2\pi}
\int_{\R^2}
\frac{f\kl{x_0}}{\abs{x_1-x_0}}
\kl{\int_{\R}
\kl{\partial_a\Ro\phi}\kl{\hat n, a}
\ln \abs{2 \abs{x_1-x_0} \kl{a-\hat a}}
\rmd a}
\rmd x_0 \,.
\end{multline}
Here  $\hat n = \hat n\kl{x_1, x_0}$ and
$\hat a = \hat a \kl{x_1, x_0}$ are defined by
Equation~\req{hatns}.
\end{lemma}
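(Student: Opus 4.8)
The plan is to start from the left-hand side, make the circular means explicit, and pass to a Cartesian integration that exposes the Radon transform of $\phi$. First I would write $\kl{\partial_{r_0}\M f}\kl{x,r_0} = \frac{1}{2\pi}\int_{S^1}\nabla f\kl{x+r_0\om}\cdot\om\,\rmd\om$ and change from the polar coordinates $\kl{r_0,\om}$ centred at $x$ to the Cartesian variable $x_0 = x + r_0\om$. Since $\rmd r_0\,\rmd\om = \rmd x_0/\abs{x_0-x}$ and $\om = \kl{x_0-x}/\abs{x_0-x}$, the inner $r_0$-integral becomes $\frac{1}{2\pi}\int_{\R^2}\nabla f\kl{x_0}\cdot\frac{x_0-x}{\abs{x_0-x}^2}\,\ln\abs{\,\abs{x_0-x}^2-\abs{x-x_1}^2\,}\,\rmd x_0$, so that the whole left-hand side turns into a double integral over $\kl{x,x_0}$.

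The second step is the algebraic identity $\abs{x_0-x}^2-\abs{x-x_1}^2 = 2\abs{x_1-x_0}\kl{\hat n\cdot x-\hat a}$, with $\hat n = \hat n\kl{x_1,x_0}$ and $\hat a = \hat a\kl{x_1,x_0}$, which follows by a direct expansion from the definitions in \req{hatns}. This rewrites the logarithm exactly as the one appearing on the right-hand side and, more importantly, shows that the $x$-dependence inside the logarithm enters only through the scalar $\hat n\cdot x$. That is precisely the structure needed to invoke the projection--slice relation $\int_{\R^2}\phi\kl{x}\,h\kl{\hat n\cdot x}\,\rmd x = \int_\R \kl{\Ro\phi}\kl{\hat n,a}\,h\kl{a}\,\rmd a$, valid for any $h$ after decomposing $x = a\hat n + b\hat n^\bot$.

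The obstacle to applying the slice relation directly is the vector factor $\kl{x_0-x}/\abs{x_0-x}^2$ together with the gradient $\nabla f\kl{x_0}$. To remove them I would apply Fubini and integrate by parts in $x_0$, moving $\nabla_{x_0}$ onto the kernel. The key tool is the two-dimensional Green identity $\nabla_{x_0}\cdot\frac{x_0-x}{\abs{x_0-x}^2} = 2\pi\,\delta\kl{x_0-x}$, so the product rule splits the divergence into a smooth part and a diagonal part. The smooth part is $\frac{1}{\abs{x_1-x_0}}\,\frac{1}{\hat n\cdot x-\hat a}$; applying the slice relation and then one integration by parts in $a$, which converts $1/\kl{a-\hat a}$ into $\partial_a\ln\abs{a-\hat a}$ and thereby transfers the derivative onto $\Ro\phi$, reproduces the right-hand side of \req{intphi}.

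The step I expect to be the main obstacle is the careful treatment of the diagonal contribution and of the conditional convergence. The kernel $1/\kl{\abs{x_0-x}^2-\abs{x-x_1}^2}$ is only a principal value, so Fubini and the interchange of the $a$-integration by parts with the $x_0$-integration are not automatic: one must check that the boundary terms at $a=\pm\infty$ vanish, using that $\kl{\Ro\phi}\kl{\hat n,\edot}$ has compact support, and that the $\ln\abs{x-x_1}$--type terms arising on the diagonal $x_0=x$ from the Dirac mass are accounted for consistently when the two orders of integration are compared. Verifying that these diagonal terms cancel, and that the principal-value integrals in both orders agree, is where the real bookkeeping lies; everything else is the change of variables and the two integrations by parts described above.
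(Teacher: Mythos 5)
Your route is genuinely different from the paper's. The paper first proves \req{intphi} with an arbitrary $\Phi \in C^1(\R)\cap L^1(\R)$ in place of $\ln\lvert\cdot\rvert$: it integrates by parts in $r_0$, passes to polar coordinates, applies Fubini and the slice decomposition $x = a\hat n + b\hat n^\bot$, integrates by parts in $a$, and only at the very end approximates $\ln\lvert\cdot\rvert$ by such $\Phi_n$ in $L^1$. That approximation device is precisely what lets it avoid ever manipulating the principal-value kernel $1/(a-\hat a)$ or a Dirac mass. You instead work with the logarithm directly and move the derivative from $f$ onto the kernel via $\nabla_{x_0}\cdot\frac{x_0-x}{\lvert x_0-x\rvert^2} = 2\pi\,\delta(x_0-x)$. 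The algebra up to that point is correct: the change to Cartesian coordinates, the identity $\lvert x_0-x\rvert^2-\lvert x-x_1\rvert^2 = 2\lvert x_1-x_0\rvert(\hat n\cdot x-\hat a)$, the slice relation, and the $a$-integration by parts all work, with the extra constant $\ln(2\lvert x_1-x_0\rvert)$ harmless because $\int_\R (\partial_a\Ro\phi)(\hat n,a)\,\rmd a = 0$.

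The gap is the diagonal term, and it is not the kind of gap you suggest. If you carry your computation through, the smooth part of the divergence, namely $\frac{2}{\lvert x_0-x\rvert^2-\lvert x-x_1\rvert^2}$, already reproduces the \emph{entire} right-hand side of \req{intphi} after the slice relation and the $a$-integration by parts. The Dirac-mass contribution therefore has nothing to cancel against: it is an additional term $-2\int_{\R^2}\phi(x)f(x)\ln\lvert x-x_1\rvert\,\rmd x$ on the left-hand side, which does not vanish for general $f$ and $\phi$. So "verifying that these diagonal terms cancel" cannot be deferred to bookkeeping --- you must either prove this term is zero or explain how the statement absorbs it, and neither is done in the proposal. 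It is worth noting that exactly the same quantity arises on the paper's route as the boundary contribution $\M f(x,0)\,\Phi(-\lvert x-x_1\rvert^2) = f(x)\,\Phi(-\lvert x-x_1\rvert^2)$ at $r_0=0$ in its first integration by parts (the paper's displayed identity discards it, tacitly treating $\partial_{r_0}\M f$ so that no boundary term appears). Your approach has therefore correctly isolated the one step on which the lemma turns, but as written it does not resolve it, and the anticipated cancellation is not available.
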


\begin{proof}
We first verify \req{intphi} with  $\Phi\kl{\,\cdot\,}$ in place of $\ln \abs{\,\cdot\,}$, where $\Phi \colon \R \to \R$ is  continuously differentiable and integrable. Hence we show
that  for every $x_1 \in \Om$ the following holds:
\begin{multline}\label{eq:intphi2}
\int_{\R^2}
\phi\kl{x}
\kl{\int_{0}^\infty
\kl{\partial_{r_0} \M f}\kl{x, r_0}
\Phi \kl{r_0^2 - \abs{x-x_1}^2}
\rmd r_0}
\rmd x
\\=
\frac{1}{2\pi}
\int_{\R^2}
\frac{f\kl{x_0}}{\abs{x_1-x_0}}
\kl{\int_{\R}
\kl{\partial_a\Ro\phi}\kl{\hat n, a}
\Phi \kl{2 \abs{x_1-x_0} \kl{a-\hat a}}
\rmd a}
\rmd x_0 \,.
\end{multline}
After performing one integration by parts, using the definition of the circular mean transform, and introducing polar coordinates around the center $x$ afterwards, the inner integral on the left hand side  of  \req{intphi2} can be written as
\begin{align*}
\int_{0}^\infty
\kl{\partial_{r_0} \M f}\kl{x, r_0}
\Phi\kl{r_0^2 - \abs{x-x_1}^2}
\rmd r_0
&=
-2
\int_{0}^\infty
r_0
\kl{\M f}\kl{x, r_0}
\Phi'\kl{r_0^2 - \abs{x-x_1}^2}
\rmd r_0
\\
&=
-\frac{1}{\pi}
\int_{0}^\infty
\int_{S^1}
r_0
f\kl{x + r_0\om_0}
\Phi'\kl{r_0^2 - \abs{x-x_1}^2}
\rmd\om_0
\rmd r_0
\\
&=
-\frac{1}{\pi}
\int_{\Om}
f\kl{x_0}
\Phi'\kl{\abs{x-x_0}^2 - \abs{x-x_1}^2}
\rmd x_0 \,.
\end{align*}
By straight forward computation one verifies that  $\abs{x-x_0}^2 - \abs{x-x_1}^2
= 2 \kl{x_1-x_0} \cdot \kl{x - \kl{x_1+x_0}/2 }$.
Therefore,  one application of Fubini's
theorem, the definition of the Radon transform and the definitions of
$\hat n = \hat n\kl{x_1, x_0}$ and
$\hat a = \hat a \kl{x_1, x_0}$  yield
\begin{multline*}
\int_{\R^2}
\phi\kl{x}
\int_{0}^\infty
\kl{\partial_{r_0} \M f}\kl{x, r_0}
\Phi\kl{r_0^2 - \abs{x-x_1}^2}
\rmd r_0
\rmd x
\\
\begin{aligned}
&=
-
\frac{1}{\pi}
\int_{\Om}  f\kl{x_0}
\int_{\R^2}
\phi\kl{x}
\Phi'\kl{2 \kl{x_1-x_0} \cdot \kl{ x - \kl{x_1-x_0}/2}}
\rmd x
\rmd x_0
\\
&=
-\frac{1}{\pi}\int_{\Om}
f\kl{x_0}
\kl{
\int_{\R}
\int_{\R}
\phi\kl{a \hat n + b \hat n^\bot}
\Phi'\kl{2 \abs{x_1-x_0} \kl{ a - \hat a}}
\rmd b
\rmd a}
\rmd x_0
\\
&=
-\frac{1}{\pi}\int_{\Om}
f\kl{x_0}
\kl{\int_{\R}
\kl{\Ro\phi}\kl{\hat n, a}
\Phi'\kl{2 \abs{x_1-x_0} \kl{a - \hat a}}
\rmd a}
\rmd x_0
\end{aligned}
\end{multline*}
Finally, after performing one further integration by parts in the inner integral, the last expression is seen to be equal to
\begin{equation*}
\frac{1}{2\pi}
\int_{\Om}
\frac{f\kl{x_0}}{\abs{x_1-x_0}}
\kl{\int_{\R}
\kl{\partial_a\Ro\phi}\kl{\hat n, a}
\Phi\kl{2 \abs{x_1-x_0} \kl{a-\hat a}}
\rmd a}
\rmd x_0 \,.
\end{equation*}
This shows  that \req{intphi2} indeed
holds for all  $\Phi \in C^1\kl{\R} \cap L^1\kl{\R}$.
In order to verify the corresponding identity for $\ln \abs{\,\cdot\,}$ in place of $\Phi$
we may write $\ln \abs{\,\cdot\,}$ as the $L^1$-limit of continuously differentiable functions $\Phi_n$.
Application of \req{intphi2} with $\Phi_n$ in place of $\Phi$  and taking the limit $n\to \infty$ afterwards then implies
the desired identity~\req{intphi}.
\end{proof}

The next  Lemma is the key for the results in this paper.

\begin{lemma}\label{lem:aux-main}
For all $f, g \in C_c^\infty \kl{\Om}$ we have
\begin{multline}\label{eq:aux-main}
\int_\Om
\kl{
\int_0^\infty
\kl{\Uo f}  \kl{x, t}
\kl{\Vo g}  \kl{x, t}
\,
\rmd t}
\rmd x
 \\
=
- \frac{1}{8\pi}
\int_\Om
\int_\Om
f\kl{x_0}
g\kl{x_1}
\frac{ \kl{\Ho_a \Ro \chi_\Om}
\kl{\hat n\kl{x_1,x_0},\hat a\kl{x_1,x_0}}}
{\abs{x_1-x_0}}
\, \rmd x_0
 \rmd x_1
 \,.
\end{multline}
Here $\Ho_a$ is the Hilbert transform in the second argument,
$\chi_\Om$ is the characteristic function of $\Om$,
and $\hat n = \hat n\kl{x_1, x_0}$ and $\hat a = \hat a \kl{x_1, x_0}$ are defined by
Equation~\req{hatns}.
\end{lemma}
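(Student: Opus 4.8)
The plan is to pass from the wave solutions to circular means through the two–dimensional Poisson--Kirchhoff formula, to carry out the $t$–integration explicitly, and to recognise the resulting spatial kernel as a Hilbert–transformed Radon transform of $\chi_\Om$. Recall that $\kl{\Vo g}\kl{x,t}=\int_0^{t} r\,\kl{\M g}\kl{x,r}/\sqrt{t^2-r^2}\,\rmd r$ and that $\Uo f=\partial_t\Vo f$. Substituting these into the left side of \req{aux-main} and applying Fubini in $(t,r)$ splits the problem into a one–dimensional, a spatial, and a geometric step.

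First I would establish, for every $x\in\Om$ and $s>0$, the pointwise back–projection identity
\[
\int_{s}^{\infty}\frac{\kl{\Uo f}\kl{x,t}}{\sqrt{t^2-s^2}}\,\rmd t
=\mathrm{p.v.}\int_{0}^{\infty}\frac{r\,\kl{\M f}\kl{x,r}}{r^2-s^2}\,\rmd r .
\]
After $\rho=r^2,\ \tau=t^2$ this becomes an identity for the half–order Abel transform $\kl{\mathcal{A}m}\kl{\tau}=\int_0^{\tau}m\kl{\rho}/\sqrt{\tau-\rho}\,\rmd\rho$, its singular content being governed by the Beta integral $\int_\rho^\tau\kl{\tau-u}^{-1/2}\kl{u-\rho}^{-1/2}\,\rmd u=\pi$. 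Combined with the Poisson formula for $\Vo g$ and one more Fubini, this yields for each fixed $x$
\[
\int_{0}^{\infty}\kl{\Uo f}\kl{x,t}\,\kl{\Vo g}\kl{x,t}\,\rmd t
=\mathrm{p.v.}\int_{0}^{\infty}\!\!\int_{0}^{\infty}
\frac{r_0\kl{\M f}\kl{x,r_0}\,r_1\kl{\M g}\kl{x,r_1}}{r_0^2-r_1^2}\,\rmd r_0\,\rmd r_1 .
\]

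Next I would undo the means. Writing $\int_0^\infty r\,\kl{\M h}\kl{x,r}\,\Psi\kl{r}\,\rmd r=\tfrac1{2\pi}\int_{\R^2}h\kl{y}\,\Psi\kl{\abs{x-y}}\,\rmd y$ for $h=f,g$ turns the last double integral into $\tfrac1{4\pi^2}\,\mathrm{p.v.}\iint f\kl{x_0}g\kl{x_1}/\kl{\abs{x-x_0}^2-\abs{x-x_1}^2}\,\rmd x_0\,\rmd x_1$. Integrating over $x\in\Om$ and interchanging, the inner integral is purely geometric: the elementary identity $\abs{x-x_0}^2-\abs{x-x_1}^2=2\abs{x_1-x_0}\,\kl{\hat n\cdot x-\hat a}$, with $\hat n=\hat n\kl{x_1,x_0}$ and $\hat a=\hat a\kl{x_1,x_0}$ from \req{hatns}, reduces it to $\tfrac1{2\abs{x_1-x_0}}\,\mathrm{p.v.}\int_\Om\rmd x/\kl{\hat n\cdot x-\hat a}$. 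Slicing $\Om$ by the lines $\set{\hat n\cdot x=\alpha}$, the length of each slice is $\kl{\Ro\chi_\Om}\kl{\hat n,\alpha}$, so this principal value equals $\mathrm{p.v.}\int_\R\kl{\Ro\chi_\Om}\kl{\hat n,\alpha}/\kl{\alpha-\hat a}\,\rmd\alpha=-\pi\,\kl{\Ho_a\Ro\chi_\Om}\kl{\hat n,\hat a}$ by the definition of $\Ho_a$. The constants $\tfrac1{4\pi^2}\cdot\tfrac1{2}\cdot\kl{-\pi}$ combine to $-1/(8\pi)$, giving precisely the kernel of \req{aux-main}.

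The main obstacle is the $t$–integration: the kernels $1/\sqrt{t^2-s^2}$ and $1/\kl{r^2-s^2}$ are non–integrable at the coincidence locus and must be read as principal values, and a naive integration by parts produces divergent boundary terms. I expect the cleanest rigorous route to be to carry the \emph{logarithmic} kernel of Lemma~\ref{lem:intphi} throughout, so that no individual term is singular, and to pass from $\ln\abs{\,\cdot\,}$ to $1/\kl{\,\cdot\,}$ only at the very end via one integration by parts in $a$ (using $\partial_a\ln\abs{a-\hat a}=1/\kl{a-\hat a}$ together with $\int_\R\partial_a\Ro\chi_\Om\,\rmd a=0$, which discards the spurious $\ln\abs{2\abs{x_1-x_0}}$ factor). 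A genuine subtlety is that $\chi_\Om$ is not $C^1$, so Lemma~\ref{lem:intphi} cannot be quoted verbatim with $\phi=\chi_\Om$; the boundary term this non–smoothness produces is an $f$–independent logarithmic contribution that cancels once one symmetrises in $f$ and $g$ by means of $\int_0^\infty\kl{\Uo f}\kl{\Vo g}\,\rmd t=-\int_0^\infty\kl{\Vo f}\kl{\Uo g}\,\rmd t$. This antisymmetry is also a useful consistency check, since the kernel $\kl{\Ho_a\Ro\chi_\Om}\kl{\hat n,\hat a}/\abs{x_1-x_0}$ is odd under $x_0\leftrightarrow x_1$.
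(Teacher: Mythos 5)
Your proposal is correct and reaches the identity by essentially the same mechanism as the paper: represent $\Uo f$ and $\Vo g$ by the Poisson--Kirchhoff formula, carry out the $t$--integration explicitly, and identify $\mathrm{p.v.}\int_\Om \rmd x/\kl{\hat n\cdot x-\hat a}$ with $-\pi\kl{\Ho_a\Ro\chi_\Om}\kl{\hat n,\hat a}$ by slicing $\Om$ along the level sets of $\hat n\cdot x$; your constant count $\tfrac{1}{4\pi^2}\cdot\tfrac12\cdot\kl{-\pi}=-\tfrac{1}{8\pi}$ is right. The organization differs: the paper never forms your pointwise back-projection identity or the antisymmetric kernel $1/\kl{r_0^2-r_1^2}$, but instead keeps the logarithmic kernel $-\tfrac12\ln\sabs{r_0^2-r_1^2}$ produced by $\int^T t\,\rmd t/\kl{\sqrt{t^2-r_0^2}\sqrt{t^2-r_1^2}}$, pairs $\partial_{r_0}\M f$ against $r_1\M g$, converts to the Radon transform via Lemma~\ref{lem:intphi} applied with $\phi=\chi_\Om$, and passes to the Hilbert transform only at the last step --- exactly the regularized route you sketch in your final paragraph. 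Two remarks on your safeguards. First, the non-smoothness of $\chi_\Om$ is in fact harmless here: $\Ro\chi_\Om$ is continuous with compact support and $\partial_a\Ro\chi_\Om$ is integrable, so the $a$--integration by parts creates no boundary terms. The genuine boundary-term issue sits at the $r=0$ endpoint of the radial integrations by parts, where $\M f\kl{x,0}=f\kl{x}\neq 0$ for $x\in\Om$: the correct form of \req{sol-UU2} is $\kl{\Uo f}\kl{x,t}=f\kl{x}+\int_0^t\kl{\partial_r\M f}\kl{x,r}\,t/\sqrt{t^2-r^2}\,\rmd r$, and the first display in the proof of Lemma~\ref{lem:intphi} likewise omits a term $-f\kl{x}\Phi\kl{-\abs{x-x_1}^2}$; after integrating over $\Om$ these two omissions contribute $\mp\tfrac{1}{2\pi}\int_\Om\int_\Om f\kl{x}g\kl{y}\ln\abs{x-y}\,\rmd x\,\rmd y$ and cancel. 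Second, precisely because these parasitic terms are symmetric in $\kl{f,g}$ while both sides of \req{aux-main} are antisymmetric, your symmetrization via $\int_0^\infty\kl{\Uo f}\kl{\Vo g}\,\rmd t=-\int_0^\infty\kl{\Vo f}\kl{\Uo g}\,\rmd t$ eliminates them automatically, so your argument is sound and, once the principal values are justified as you indicate, arguably tighter in its bookkeeping than the paper's.
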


\begin{proof}
First recall  that the  explicit expressions for the solutions of
the initial value problems  \req{wave2} and  \req{wave}
(see, for example, \cite[p. 134]{Joh82})
are given by
\begin{align*} 
 \kl{\Vo g} \kl{x,t}
 &=
 \int_{0}^{t}
 \frac{r \kl{\M g} \kl{x, r } }
 {\sqrt{t^2 - r^2}} \, \rmd r \,,
\\ 
 \kl{\Uo f} \kl{x,t}
 &=
 \partial_{t}
 \int_{0}^{t}
 \frac{r \kl{\M f} \kl{x, r } }
 {\sqrt{t^2 - r^2}} \, \rmd r
 \,.
 \end{align*}
After performing  one integration by parts and differentiating under the integral sign, the expression for  $\Uo f$ can be rewritten as
\begin{equation}\label{eq:sol-UU2}
 \kl{ \Uo  f}\kl{x,t}
 =
\partial_{t}
\int_{0}^{t }
 \kl{ \partial_r \M f} \kl{x, r }
\sqrt{t^2 - r^2}
\,  \rmd r
\\ =
\int_{0}^{t }
\kl{ \partial_r \M f} \kl{x, r }
\frac{t}{\sqrt{t^2 - r^2}}
\, \rmd r
 \,.
\end{equation}
These identities and Fubini's theorem yield
\begin{multline*}
\int_{0}^\infty
\kl{\Uo f} \kl{x,t}
\kl{\Vo g} \kl{x,t}
\rmd t
\\
\begin{aligned}
&=
\lim_{T \to \infty}
\int_0^T
t
\kl{\int_0^t
\frac{\kl{\partial_{r_0}\M f}\kl{x,r_0}}{\sqrt{t^2 - r_0^2}}
\rmd r_0}
\kl{\int_0^t
\frac{\kl{r_1 \M g}\kl{x,r_1}}{\sqrt{t^2 - r_1^2}}
\rmd r_1} \rmd t
\\&
=
\lim_{T \to \infty}
\int_0^\infty
\kl{r_1 \M g}\kl{x,r_1}
\int_0^\infty
\kl{\partial_{r_0}\M f}\kl{x,r_0}
\kl{\int_{\max \set{r_0, r_1}}^T
\frac{t \rmd t
}{\sqrt{t^2 - r_0^2}\sqrt{t^2 - r_1^2}}
}
\rmd r_0
\rmd r_1
\,.
\end{aligned}
\end{multline*}
The  inner integral in the above expression
evaluates to
\begin{equation} \label{eq:intgreen}
\int_{\max\set{r_0, r_1}}^T
 \frac{t \, \rmd t}{\sqrt{t^2 - r_1^2}\sqrt{t^2 - r_0^2}}
\\
=
\ln \kl{\sqrt{T^2 - r_0^2} +  \sqrt{T^2 - r_1^2}}
- \frac{1}{2}  \ln  \kl{ \abs{r_0^2 - r_1^2 } }  \,.
\end{equation}
Hence we obtain
\begin{multline*}
\int_{0}^\infty
\kl{\Uo f} \kl{x,t}
\kl{\Vo g} \kl{x,t}
\rmd t
\\
\begin{aligned}
&=
\lim_{T \to \infty}
\int_0^\infty
\int_0^\infty
\kl{\partial_{r_0}\M f}\kl{x,r_0}
\kl{r_1\M g}\kl{x,r_1}
\ln \kl{ \sqrt{T^2 - r_0^2} + \sqrt{T^2 - r_1^2}  }
\rmd r_0
\rmd r_1
\\
&\hspace{1cm} -\frac{1}{2}
\int_0^\infty
\int_0^\infty
\kl{\partial_{r_0}\M f}\kl{x,r_0}
\kl{r_1\M g}\kl{x,r_1}
 \ln   \abs{ r_0^2 - r_1^2}
\rmd r_0
\rmd r_1
\,.
\end{aligned}
\end{multline*}

After one integration by  parts and performing the limit $T\to \infty$ afterwards,
the first term on the  right hand side is seen to vanish.
Therefore, by integrating over the variable $x$,
interchanging the order of integration and introducing polar
coordinates, we obtain
\begin{multline*}
\int_\Om
\int_0^\infty
\kl{\Uo f} \kl{x,t}
\kl{\Vo g} \kl{x,t}
\rmd t
\rmd x
\\
=
-\frac{1}{4\pi}
\int_\Om
g\kl{x_1}
\kl{\int_{\R^2}\chi_\Om \kl{x}
\int_0^\infty
\kl{\partial_{r_0}\M f}\kl{x,r_0}
 \ln   \abs{r_0^2 - \abs{x-x_1}^2}
\rmd r_0
\rmd x }
\rmd x_1
\,.
\end{multline*}
According to Lemma \ref{lem:intphi}, the double integral in brackets
equals
\begin{equation*}
\frac{1}{2\pi}
\int_{\Om}
\frac{f\kl{x_0}}{\abs{x_1-x_0}}
\kl{
\int_{\R}
\kl{\partial_a\Ro\chi_\Om}\kl{\hat n, a}
\ln \abs{2 \abs{x_1-x_0} \kl{a - \hat a}}
\rmd a
}
\rmd x_0\,.
\end{equation*}
Next note that the distributional derivative of  $\ln \abs{a}$ is $\mathrm{P.V.} \frac{1}{a}$.
After recalling the definition of the Hilbert transform we therefore obtain 
\begin{multline*}
\int_\Om
\int_0^\infty
\kl{\Uo f} \kl{x,t}
\kl{\Vo g} \kl{x,t}
\rmd t
\rmd x
\\
\begin{aligned}
&=
-\frac{1}{8\pi^2}
\int_\Om
\int_{\Om}
\frac{f\kl{x_0}g\kl{x_1}}{\abs{x_1-x_0}}
\kl{\int_{\R}
\kl{\partial_a\Ro\chi_\Om}\kl{\hat n, a}
\ln \abs{2 \abs{x_1-x_0} \kl{a-\hat a}}
\rmd a}
 \rmd x_0
\, \rmd x_1
\\
&=
-
\frac{1}{8\pi^2}
\int_\Om
\int_\Om
\frac{f\kl{x_0}g\kl{x_1}}{\abs{x_1-x_0}}
\kl{\mathrm{P.V.} \int_{\R}
\kl{\Ro\chi_\Om}\kl{\hat n, a}
\frac{\rmd a}{\hat a - a} }
\rmd x_0
\, \rmd x_1
\\
&=
-
\frac{1}{8\pi}
\int_\Om
\int_\Om
\frac{f\kl{x_0}g\kl{x_1}}{\abs{x_1-x_0}}
\kl{\Ho_a \Ro\chi_\Om}\kl{\hat n, \hat a}
\,
\rmd x_0
\rmd x_1
\,.
\end{aligned}
\end{multline*}
Hence we have verified~\req{aux-main} which concludes the proof
\end{proof}

We are now ready to derive the following auxiliary theorem  from which we will
extract all formulas presented in the introduction.

\begin{theorem} \label{thm:main-aux}
For all $f, g \in C_c^\infty \kl{\Om}$  we have
\begin{multline}\label{eq:green}
 -2
 \int_{\partial\Om}
 \nu_x \cdot \kl{
\int_0^\infty
 \kl{\Uo f} \kl{x, t}
\kl{\nabla_{x} \Vo g} \kl{x, t}
\rmd t }
\rmd s\kl{x}
\\
=
\int_\Om f \kl{x_0}
g\kl{x_0}  \rmd x_0
-
\int_\Om
\kl{\K_\Om f}
\kl{x_0}
g \kl{x_0}  \rmd x_0
\,.
\end{multline}
Here $\Uo$ and $\Vo$ denote the solution operators of the initial value problems
\req{wave} and \req{wave2}, respectively, $\K_\Om f$  is defined by \req{kern},
and $\nu_x$ denotes the outward pointing unit
normal to $\partial \Om$.
\end{theorem}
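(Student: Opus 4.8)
The plan is to reduce~\req{green} to the already established Lemma~\ref{lem:aux-main} by splitting the integrand into an antisymmetric and a symmetric part. Writing $u \coloneqq \Uo f$ and $v \coloneqq \Vo g$, both of which solve the homogeneous wave equation, I would use the pointwise decomposition
\[
u\,\nabla_x v = \tfrac12\kl{u\,\nabla_x v - v\,\nabla_x u} + \tfrac12\,\nabla_x\kl{uv}
\]
and treat the two resulting contributions to the boundary integral separately, so that $\text{(left-hand side of \req{green})} = -\int_{\partial\Om}\nu_x\cdot\int_0^\infty\kl{u\nabla_x v - v\nabla_x u}\rmd t\,\rmd s\kl{x} - \int_{\partial\Om}\nu_x\cdot\nabla_x G\,\rmd s\kl{x}$, where $G\kl{x}\coloneqq\int_0^\infty\kl{\Uo f}\kl{x,t}\kl{\Vo g}\kl{x,t}\,\rmd t$.

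For the antisymmetric part I would invoke space--time reciprocity. Since $u$ and $v$ both satisfy $\skl{\partial_t^2-\Delta_x}\,\cdot\,=0$, one has the pointwise identity $\nabla_x\cdot\kl{u\nabla_x v - v\nabla_x u} = u\,\partial_t^2 v - v\,\partial_t^2 u = \partial_t\kl{u\,\partial_t v - v\,\partial_t u}$. Integrating in $t$ over $\kl{0,\infty}$ and inserting the initial conditions of \req{wave} and \req{wave2} (namely $u\kl{\cdot,0}=f$, $\partial_t u\kl{\cdot,0}=0$, $v\kl{\cdot,0}=0$, $\partial_t v\kl{\cdot,0}=g$, together with the decay of the solutions as $t\to\infty$) collapses this to $-fg$ pointwise. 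The divergence theorem then turns the antisymmetric boundary contribution into exactly $\int_\Om f\kl{x_0}g\kl{x_0}\,\rmd x_0$, the first term on the right of \req{green}. For the symmetric part, $\int_0^\infty\nabla_x\kl{uv}\,\rmd t=\nabla_x G$, so by the divergence theorem its boundary contribution equals $-\int_\Om\Delta_x G\,\rmd x$. It therefore remains to prove $\int_\Om\Delta_x G\,\rmd x=\int_\Om\kl{\K_\Om f}g\,\rmd x_0$.

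Here I would reuse the computation inside the proof of Lemma~\ref{lem:aux-main}: carried out pointwise in $x$ and stopped one stage earlier (at the logarithmic kernel, with $\Phi=\ln\sabs{\,\cdot\,}$, $\Phi'=\mathrm{P.V.}\,1/\skl{\,\cdot\,}$), the same steps give the representation
\[
G\kl{x} = \frac{1}{4\pi^2}\int_\Om\int_\Om
\frac{f\kl{x_0}\,g\kl{x_1}}{2\,\kl{x_1-x_0}\cdot\kl{x-\kl{x_1+x_0}/2}}\,\rmd x_0\,\rmd x_1,
\]
the inner pairing understood as a principal value. Applying $\Delta_x$ to the kernel $1/\skl{2\,p\cdot\kl{x-c}}$, with $p\coloneqq x_1-x_0$ and $c\coloneqq\kl{x_1+x_0}/2$, yields $\sabs{p}^2/\kl{p\cdot\kl{x-c}}^3$; integrating over $x\in\Om$ along the level lines $\set{x:\hat n\cdot x=a}$ and using $p\cdot\kl{x-c}=\sabs{p}\,\skl{\hat n\cdot x-\hat a}$ reduces the $x$-integral to the one-dimensional finite-part integral $\int_\R\kl{\Ro\chi_\Om}\kl{\hat n,a}\,\kl{a-\hat a}^{-3}\,\rmd a$. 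Since the distributional identity $\partial_a^2\kl{1/\skl{\pi a}}=\kl{2/\pi}\,\mathrm{f.p.}\,a^{-3}$ gives $\mathrm{f.p.}\int_\R\psi\kl{a}\,\kl{a-\hat a}^{-3}\,\rmd a=-\tfrac{\pi}{2}\,\kl{\partial_a^2\Ho_a\psi}\kl{\hat a}$, I obtain
\[
\int_\Om\Delta_x G\,\rmd x
= -\frac{1}{8\pi}\int_\Om\int_\Om f\kl{x_0}\,g\kl{x_1}\,
\frac{\kl{\partial_a^2\Ho_a\Ro\chi_\Om}\kl{\hat n\kl{x_1,x_0},\hat a\kl{x_1,x_0}}}{\abs{x_1-x_0}}\,\rmd x_0\,\rmd x_1.
\]
Comparing with \req{kern}, and using that $\Ro\chi_\Om$ is even under $\kl{n,a}\mapsto\kl{-n,-a}$ whereas $\partial_a^2\Ho_a$ is parity reversing (so $\partial_a^2\Ho_a\Ro\chi_\Om$ is odd, and the sign it produces under the interchange $x_0\leftrightarrow x_1$, which sends $\kl{\hat n,\hat a}\mapsto\kl{-\hat n,-\hat a}$, matches the sign in \req{kern}), the right-hand side is precisely $\int_\Om\kl{\K_\Om f}\kl{x_0}\,g\kl{x_0}\,\rmd x_0$, which completes the reduction.

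The main obstacle is the symmetric term, specifically the rigorous handling of the hypersingular $x$-integral: one must justify differentiating the principal-value representation of $G$ twice under the integral sign and interpret $\int_\Om\kl{p\cdot\kl{x-c}}^{-3}\,\rmd x$ as a finite-part integral matching $\partial_a^2\Ho_a\Ro\chi_\Om$. As noted in Remark~\ref{rem:smooth}, the singular line $\ell\kl{x_1,x_0}$ is never tangent to $\partial\Om$ for $x_0\neq x_1\in\Om$, so $\hat a$ lies in the interior of the support of $\kl{\Ro\chi_\Om}\kl{\hat n,\,\cdot\,}$, where this function is smooth; hence the finite-part evaluation is legitimate. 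Making this precise, together with the $t\to\infty$ decay estimates that kill the temporal boundary terms, is where the real work lies; the reciprocity identity and the two applications of the divergence theorem are routine.
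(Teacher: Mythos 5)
Your decomposition $2u\,\nabla_x v=(u\nabla_x v-v\nabla_x u)+\nabla_x(uv)$ is sound, and the antisymmetric half is, up to packaging, the paper's own argument: Green's second identity in space, the wave equations to trade $\Delta_x$ for $\partial_t^2$, the collapse $u\partial_t^2v-v\partial_t^2u=\partial_t(u\partial_tv-v\partial_tu)$, and the initial conditions producing $\int_\Om fg$. Where you genuinely diverge is the symmetric half. You represent $G(x)=\int_0^\infty uv\,\rmd t$ pointwise as a principal-value double integral with kernel $\kl{2(x_1-x_0)\cdot(x-(x_1+x_0)/2)}^{-1}$ and then hit that kernel with $\Delta_x$, which lands you on the finite-part integral of $\kl{\Ro\chi_\Om}\kl{\hat n,\edot}$ against $(a-\hat a)^{-3}$. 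The paper never differentiates the singular kernel in $x$: it expands $\Delta_x(uv)=(\Uo\Delta_x f)(\Vo g)+2(\Uo\nabla_x f)\cdot(\Vo\nabla_x g)+(\Uo f)(\Vo\Delta_x g)$, applies the weakly singular Lemma~\ref{lem:aux-main} to each pair so that $(\nabla_{x_0}+\nabla_{x_1})^2$ sits on $f(x_0)g(x_1)$, and only then integrates by parts in $(x_0,x_1)$; since $(\nabla_{x_0}+\nabla_{x_1})$ annihilates $\hat n$ and $\abs{x_1-x_0}^{-1}$ and maps $\hat a$ to $\hat n$, the two derivatives appear as $\partial_a^2$ on $\Ho_a\Ro\chi_\Om$ without ever worsening the integrable $1/\abs{x_1-x_0}$ singularity. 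That reorganization is exactly what disposes of the obstacle you flag at the end: the interchange of $\Delta_x$ with the principal-value integral and the finite-part interpretation of $\int_\Om(p\cdot(x-c))^{-3}\,\rmd x$ --- which you correctly identify as the real work and leave open --- is replaced in the paper by a classical integration by parts against the smooth $f,g$. Your bookkeeping is otherwise correct (the constant $-1/(8\pi)$, the identity $\partial_a^2\,\mathrm{P.V.}\,a^{-1}=2\,\mathrm{f.p.}\,a^{-3}$, and the parity argument under $x_0\leftrightarrow x_1$, which the paper uses tacitly when identifying the final double integral with $\int_\Om(\K_\Om f)g$), so your route would close once the hypersingular step is justified; as written, however, that step is asserted rather than proved, and the paper's arrangement of the same ingredients is the cleaner way to avoid having to prove it.
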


\begin{proof}
Application of  Greens second identity to the functions
$ \kl{\Uo f} \kl{\edot, t}$ and   $ \kl{\Vo g} \kl{\edot, t}$ with fixed
$t$ followed by an integration  over the temporal variable yields
\begin{multline*}
\int_0^{\infty}
\kl{ \int_\Om
\kl{\kl{\Vo g}\kl{x,t} \kl{\Delta_{x}\Uo f} \kl{x,t}
-
\kl{\Uo f}\kl{x,t} \kl{\Delta_{x} \Vo g}  \kl{x,t} } \rmd x 
} \rmd t
\\=
\int_0^{\infty}
\kl{
\int_{\partial\Om} \nu_x
\kl{ \kl{\Vo g}\kl{x,t} \kl{\nabla_{x}\Uo f} \kl{x,t}
- \kl{\Uo f}\kl{x,t} \kl{\nabla_{x} \Vo g}  \kl{x,t} }
\rmd s\kl{x}
} \rmd t\,.
\end{multline*}
Since $\Vo f$ and $\Uo g$ are both solutions  of the  wave  equation
we have the equalities
$\Delta_{x}\Uo f   = \partial_{t}^2\Uo f $
and
$\Delta_{x} \Vo g  = \partial_{t}^2 \Vo g $.
Together with the relation
$ \kl{\Vo g} \kl{\nabla_{x}\Uo f} = \nabla_{x}\kl{\Vo g\Uo f} -
 \kl{\nabla_{x} \Vo g}\kl{\Uo f}$ and one application of the divergence theorem this further implies
 \begin{multline*}
\int_\Om \kl{
\int_0^{\infty}
\kl{\kl{\Vo g}\kl{x,t} \kl{\partial_{t}^2 \Uo f} \kl{x,t}
-
\kl{\Uo f}\kl{x,t} \kl{\partial_{t}^2 \Vo g}  \kl{x,t} } \rmd t} \rmd x
\\=
\int_0^{\infty} \kl{
\int_{\Om}
\Delta_{x} \kl{ \kl{\Vo g} \kl{x,t} \kl{\Uo f} \kl{x,t} }  \rmd x } \rmd t
- 2
\int_0^{\infty} \kl{
\int_{\partial \Om} \nu_x
\kl{\kl{\Uo f}\kl{x,t} \kl{\nabla_{x} \Vo g}  \kl{x,t} } \rmd s\kl{x}  }
\rmd t  \,.
\end{multline*}
After integrating the inner integrals on the right hand side by parts  and
using the initial conditions $\kl{\Uo f} \kl{\edot, 0}  = f$, $\kl{\partial_{t}\Uo f} \kl{\edot, 0}  = 0$ and $\kl{\partial_{t}\Vo g} \kl{\edot, 0}  = g$,
the expression on the left hand side evaluates to
$\int_\Om f \kl{x_0} g\kl{x_0} \rmd x_0$
which yields
\begin{multline}\label{eq:green-t}
\int_\Om f\kl{x_0}
g\kl{x_0}  \rmd x_0
=
\int_\Om
\int_0^\infty
\Delta_{x} \kl{ \kl{\Uo f}\kl{x, t}
\kl{\Vo g} \kl{x, t} }
\rmd t
\rmd x
 \\
 -2
 \int_{\partial\Om}
  \nu_x
  \int_0^\infty
 \kl{\Uo f} \kl{x, t}
\kl{\nabla_{x} \Vo g} \kl{x, t}
\rmd t
\rmd s\kl{x}
\,.
\end{multline}

Next notice that
\begin{align*}
\Delta_{x}
\kl{\kl{\Uo f}
\kl{\Vo g}
}
& =
\kl{\Delta_{x} \Uo f}
\kl{\Vo g}
+
2 \kl{ \nabla_{x} \Vo f}
\cdot \kl{\nabla_{x} \Uo g}
+
 \kl{\Uo f}
\kl{\Delta_{x} \Vo g}
\\
& =
\kl{ \Uo \Delta_{x} f}
\kl{\Vo g}
+
2 \kl{ \Vo \nabla_{x}  f}
\cdot \kl{ \Uo \nabla_{x} g}
+
 \kl{\Uo f}
\kl{\Vo \Delta_{x}  g} \,,
\end{align*}
where $\Vo \nabla_{x}  f$ (and similarly $\Uo \nabla_{x}  g$) is  defined
by component-wise application of $\Vo$  to $\nabla_{x} f$.
Hence in view of Lemma~\ref{lem:aux-main}
and by repeated application of the divergence theorem we
obtain
\begin{multline*}
\int_\Om
\int_0^\infty
\Delta_{x}
\kl{\kl{\Uo f}  \kl{x, t}
\kl{\Vo g}  \kl{x, t}
}
\rmd t
\rmd x
 \\
 \begin{aligned}
=& - \frac{1}{8\pi}
\int_\Om
\int_\Om
\kl{\nabla_{x_0}+\nabla_{x_1}}^2
\kl{ f\kl{x_0}
g\kl{x_1}
}
\frac{ \kl{\Ho_a \Ro \chi_\Om}
\kl{\hat n\kl{x_1,x_0},\hat a\kl{x_1,x_0}}}
{\abs{x_1-x_0}}
 \, \rmd x_0
 \rmd x_1
\\
 =&
 - \frac{1}{8\pi}
\int_\Om
\int_\Om
f\kl{x_0}
g\kl{x_1}
\kl{\nabla_{x_0}+\nabla_{x_1}}^2
\kl{
\frac{ \kl{\Ho_a \Ro \chi_\Om}
\kl{\hat n\kl{x_1,x_0},\hat a\kl{x_1,x_0}}}
{\abs{x_1-x_0}}
}
 \, \rmd x_0
 \rmd x_1
 \,.
\end{aligned}
\end{multline*}

As easily verified, the following relations hold true:
\begin{align*}
&\kl{\nabla_{x_1}  + \nabla_{x_0}} \hat n \kl{x_1,x_0}
 =0
\,, \\
&\kl{\nabla_{x_1} + \nabla_{x_0}} \hat a \kl{x_1,x_0}
 =
\kl{ x_1 - x_0}/ \abs{x_1- x_0}
\,, \\
&\kl{\nabla_{x_1}  + \nabla_{x_0}} \abs{x_1-x_0}^{-1}
=0
\,, \\
&\kl{\nabla_{x_1}  + \nabla_{x_0}} \cdot \kl{x_1-x_0}
=0
\,. \\
\end{align*}
These relations imply
\begin{equation*}
\kl{\nabla_{x_1} + \nabla_{x_0}}^2
\frac{\kl{ \Ho_a\Ro \chi_\Om }
\kl{\hat n, \hat a} }{8\pi\abs{x_1  -  x_0} }
=
\kl{\nabla_{x_1} + \nabla_{x_0}}
\frac{x_1  -  x_0}{8\pi \abs{x_1  -  x_0}^2}
\kl{\partial_a\Ho_a \Ro \chi_\Om }
\kl{\hat n, \hat a}
=
\frac{\kl{\partial_a^2\Ho_a \Ro \chi_\Om }
\kl{\hat n, \hat a}
}{8\pi\abs{x_1  -  x_0}}
\,,
\end{equation*}
and therefore
\begin{multline*}
\int_\Om
\int_0^\infty
\Delta_{x}
\kl{\kl{\Uo f}  \kl{x, t}
\kl{\Vo g}  \kl{x, t}
}
\rmd t
\rmd x
\\
=
 - \frac{1}{8\pi}
\int_\Om
\int_\Om
f\kl{x_0}
g\kl{x_1}
\frac{ \kl{\partial_a^2\Ho_a \Ro \chi_\Om}
\kl{\hat n\kl{x_1,x_0},\hat a\kl{x_1,x_0}}}
{\abs{x_1-x_0}}
 \,
 \rmd x_0
 \rmd x_1
=
\int_\Om
\kl{\K_\Om f }
\kl{x_1}
g\kl{x_1}
 \rmd x_1
 \,.
\end{multline*}
Inserting the last displayed equation into Equation~\req{green-t}
finally yields the claimed identity~\req{green}.
\end{proof}

\section{Proof of Theorem~\ref{thm:wave}}
\label{sec:wave}

In this section we establish the reconstruction formulas stated in
Theorem~\ref{thm:wave} as a consequence of
Theorem~\ref{thm:main-aux} and the following Lemma that is easy
to establish.

\begin{lemma} \label{lem:aux-wave}
For every  $f, g \in C_c^\infty \kl{\Om}$  we have
\begin{multline*}
 \int_{\partial\Om}
 \nu_x \cdot
\int_0^\infty
 \kl{\Uo f} \kl{x, t}
\kl{\nabla_{x} \Vo g} \kl{x, t}
\rmd t
\rmd s\kl{x}
\\
=
-
\frac{1}{2\pi}
\int_{\R^2}
\kl{\nabla_{x_0}
\cdot
\int_{\partial\Om}
\nu_x
\int_{\abs{x-x_0}}^\infty
  \frac{\kl{\Uo f} \kl{x, t}}{\sqrt{t^2 -\abs{x-x_0}^2}}
\,
\rmd t
\rmd s\kl{x}
}g\kl{x_0}
\rmd x_0\,.
\end{multline*}
\end{lemma}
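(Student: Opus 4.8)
The plan is to substitute the explicit Poisson-type representation of the two-dimensional wave solution $\Vo g$ into the left-hand side, transfer the spatial gradient off $\Vo g$ and onto $g$, and then apply Fubini's theorem once so that the inner temporal integral collapses into the expression
$\int_{\abs{x-x_0}}^\infty \kl{\Uo f}\kl{x,t}/\sqrt{t^2-\abs{x-x_0}^2}\,\rmd t$ that appears on the right-hand side. A final integration by parts in $x_0$ converts the resulting gradient into the claimed divergence.

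Concretely, recall that the solution of \req{wave2} admits the Cartesian representation
\[
\kl{\Vo g}\kl{x,t}
=
\frac{1}{2\pi}
\int_{\sabs{x_0-x}<t}
\frac{g\kl{x_0}}{\sqrt{t^2-\abs{x_0-x}^2}}
\,\rmd x_0 \,,
\]
which is just the polar form used in Lemma~\ref{lem:aux-main} written out in the plane. Since $\Vo$ is a convolution operator in the spatial variable it commutes with differentiation, so $\nabla_x \Vo g = \Vo \nabla g$, the same device already employed in the proof of Theorem~\ref{thm:main-aux}; equivalently, one writes $\nabla_x$ of the kernel as $-\nabla_{x_0}$ and integrates by parts in $x_0$. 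Inserting this into the inner temporal integral and interchanging the order of the $t$- and $x_0$-integrations, the characteristic function $\chi_{\set{\sabs{x_0-x}<t}}$ collapses into the lower limit of the $t$-integral and yields the pointwise identity
\[
\int_0^\infty \kl{\Uo f}\kl{x,t}\kl{\nabla_x \Vo g}\kl{x,t}\,\rmd t
=
\frac{1}{2\pi}
\int_{\R^2}
\kl{\nabla_{x_0} g}\kl{x_0}
\kl{
\int_{\abs{x-x_0}}^\infty
\frac{\kl{\Uo f}\kl{x,t}}{\sqrt{t^2-\abs{x-x_0}^2}}\,\rmd t
}
\rmd x_0 \,.
\]

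It then remains to take the inner product with $\nu_x$, integrate over $\partial\Om$, interchange the $\partial\Om$- and $x_0$-integrations, and integrate by parts in $x_0$ so as to move $\nabla_{x_0}$ from $g$ onto the vector field $\int_{\partial\Om}\nu_x\,(\cdots)\,\rmd s\kl{x}$; because $g$ is compactly supported the boundary terms vanish, $\nabla_{x_0}$ turns into the divergence $\nabla_{x_0}\cdot$, and the sign produced is exactly the $-\tfrac{1}{2\pi}$ of the claimed right-hand side. The only point requiring care is the justification of Fubini's theorem and of the spatial integration by parts, since the wave kernel is singular along $t=\abs{x-x_0}$ and the $t$-integration extends to infinity; but for $f,g\in C_c^\infty\kl{\Om}$ the profile $\nabla g$ is smooth and compactly supported and the singularity is of integrable Abel type, so all interchanges are legitimate. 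This is why the lemma is elementary once the representation formula and the commutation $\nabla_x\Vo = \Vo\nabla$ are in hand.
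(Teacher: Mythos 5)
Your proof is correct and follows essentially the same route as the paper: the commutation $\nabla_x \Vo g = \Vo\nabla g$, the explicit Poisson-type kernel $\chi\set{\sabs{x-x_0}<t}/\sqrt{t^2-\abs{x-x_0}^2}$, Fubini to collapse the characteristic function into the lower limit of the $t$-integral, and a final integration by parts in $x_0$ producing the divergence and the minus sign.
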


\begin{proof}
The identity  $\nabla_{x} \Vo g =  \Vo\nabla_{x} g$  (again $\Vo\nabla_{x} g$ is defined component-wise)
and the explicit expression for the solution of the wave equation \req{wave2} imply
\begin{multline*}
 \int_{\partial\Om}
 \nu_x \cdot
\int_0^\infty
 \kl{\Uo f} \kl{x, t}
\kl{\nabla_{x} \Vo g} \kl{x, t}
\rmd t
\rmd s\kl{x}
\\
\begin{aligned}
&=
\phantom{-}
\frac{1}{2\pi}
\int_{\partial\Om}
\nu_x \cdot
\int_0^\infty
\kl{\Uo f} \kl{x, t}
\int_{\R^2} \kl{\nabla_{x_0} g}\kl{x_0} \frac{\chi\set{\abs{x-x_0}< t}}{\sqrt{t^2 -\abs{x-x_0}^2}}
\rmd x_0
\rmd t
\rmd s\kl{x}
\\
&=
\phantom{-}
\frac{1}{2\pi}
\int_{\R^2}
\kl{\nabla_{x_0} g}\kl{x_0}
\cdot
\int_{\partial\Om}
\nu_x
\int_{\abs{x-x_0}}^\infty
  \frac{\kl{\Uo f} \kl{x, t}}{\sqrt{t^2 -\abs{x-x_0}^2}}
\rmd t
\rmd s\kl{x}
\rmd x_0
\\
&=
-
\frac{1}{2\pi}
\int_{\R^2}
\kl{\nabla_{x_0}
\cdot
\int_{\partial\Om}
\nu_x
\int_{\abs{x-x_0}}^\infty
  \frac{\kl{\Uo f} \kl{x, t}}{\sqrt{t^2 -\abs{x-x_0}^2}}
\,
\rmd t
\rmd s\kl{x}
}
g\kl{x_0}
\rmd x_0 \,.
\end{aligned}
\end{multline*}
This is already the desired identity and concludes the proof of Lemma \ref{lem:aux-wave}.
\end{proof}

\subsection{Proof of formula \req{inv-wave-a}}

By Theorem~\ref{thm:main-aux} and Lemma~\ref{lem:aux-wave}, for all $f$, $g \in C_c^\infty\kl{\Om}$ the following identity
holds true:
\begin{multline*}
\frac{1}{\pi}
\int_{\R^2}
\kl{\nabla_{x_0}
\cdot
\int_{\partial\Om}
\nu_x
\int_{\abs{x-x_0}}^\infty
  \frac{\kl{\Uo f} \kl{x, t}}{\sqrt{t^2 -\abs{x-x_0}^2}}
\,
\rmd t
\rmd s\kl{x} }
g\kl{x_0}
\rmd x_0
\\
=
\int_{\R^2} f \kl{x_0}
g\kl{x_0}  \rmd x_0
-
\int_{\R^2}
\kl{\K_\Om f}
\kl{x_0}
g \kl{x_0}  \rmd x_0 \,.
\end{multline*}
This implies  that~\req{inv-wave-a} holds almost everywhere. Since
both sides of  \req{inv-wave-a} are continuous functions
the equality   must also hold pointwise.

\subsection{Proof of formula~\req{inv-wave-b}}

The second  identity given in Theorem \ref{thm:wave} is an
easy corollary of the first one just established. Indeed, from
the chain rule and one integration by parts we obtain
\begin{multline*}
	\frac{1}{\pi}
\nabla_{x_0} \cdot \int_{\partial \Om}
	\nu_x
	\kl{
	\int_{\abs{x-x_0}}^\infty
	\frac{\kl{\Uo f}\kl{x,t}}
	{ \sqrt{t^2 - \abs{x-x_0}^2} }
	  \, \rmd t
	  }
	  \rmd s\kl{x}
	  \\
	  \begin{aligned}
	&=
	\phantom{-}	\frac{1}{\pi} 
\nabla_{x_0} \cdot \int_{\partial \Om}
	\nu_x
	\int_{\abs{x-x_0}}^\infty
	\partial_t
	\kl{ \sqrt{t^2 - \abs{x-x_0}^2} }
	 \kl{t^{-1} \Uo f }\kl{x,t} \, \rmd t \, \rmd s\kl{x}
	\\
	&=
	-\frac{1}{\pi}
\nabla_{x_0} \cdot \int_{\partial \Om}
	\nu_x
	\int_{\abs{x-x_0}}^\infty
	\sqrt{t^2 - \abs{x-x_0}^2}
	 \kl{\partial_t t^{-1} \Uo f }\kl{x,t} \, \rmd t \, \rmd s\kl{x}
	\\&=
	\phantom{-}	\frac{1}{\pi} 
\int_{\partial \Om}
	\nu_x\cdot
	\kl{x_0-x}
	\kl{
	\int_{\abs{x-x_0}}^\infty
	\frac{ \kl{\partial_{t} t^{-1} \Uo f}
	\kl{x,t} } { \sqrt{t^2-\abs{x-x_0}^2} }
	 \, \rmd t
	 }
	 \rmd s\kl{x} 	\,.
	\end{aligned}
\end{multline*}
In view of~\req{inv-wave-a},  this verifies
formula~\req{inv-wave-b}.

\section{Proof of  Theorem~\ref{thm:means}}
\label{sec:means}

In this section we derive the inversion formulas for the circular
mean transform stated in Theorem~\ref{thm:means}.
The proofs will be based on the inversion formula
\req{inv-wave-a} for the inversion of the wave equation and the
explicit expression \req{sol-UU2} for  the solution of~\req{wave}
in terms of  the circular means  $\M f$.

\subsection{Proof of  formula \req{inv-means-a}}

The inversion formula  \req{inv-wave-a} for the wave equation
and the explicit expression \req{sol-UU2} for  the solution of~\req{wave}
imply that
\begin{multline} \label{eq:mw}
f\kl{x_0}
-
\kl{\K_\Om f}\kl{x_0}
=
\frac{1}{\pi} \,
\nabla_{x_0} \cdot
\int_{\partial \Om} \nu_x
\kl{
\int_{\abs{x-x_0}}^\infty \frac{ \Uo f \kl{x, t}}{\sqrt{t^2 -\abs{x-x_0}^2 }}
\, \rmd t
}
\rmd s\kl{x}
\\
=
\frac{1}{\pi} \,
\nabla_{x_0} \cdot
\int_{\partial \Om} \nu_x
\kl{
 \lim_{T \to \infty}
\int_{\abs{x-x_0}}^T
\int_{0}^{t}
\frac{
t\,
 \kl{ \partial_r \M f} \kl{x, r } }
{\sqrt{t^2 - r^2}
\sqrt{t^2 -\abs{x-x_0}^2}
}
\, \rmd r
\rmd t }
 \,\rmd s\kl{x}
\,.
\end{multline}

After changing the order of integration and using Equation \req{intgreen},  the  inner double integral
evaluates  to
\begin{multline*}
\lim_{T \to \infty}
\int_{0}^\infty
\partial_r \kl{ \M f} \kl{x, r }
\int_{\max\set{\sabs{x-x_0}, r}}^T
\frac{t \, \rmd t }
{\sqrt{t^2 - r^2}
\sqrt{t^2 -\abs{x-x_0}^2}
}
\,
\rmd r
\\
=
- \frac{1}{2}
\int_{0}^\infty
 \kl{ \partial_r \M f} \kl{x, r }
 \ln \abs{r^2 - \sabs{x-x_0}^2}
\rmd r
 \,.
\end{multline*}
Together with  Equation~\req{mw} and
one integration by parts  (using that the  distributional derivative of $\ln \abs{r}$ is $1/r$)  this yields
\begin{multline}
f\kl{x_0}
-
\kl{\K_\Om f}\kl{x_0}
=
-\frac{1}{2\pi} \,
\nabla_{x_0} \cdot
\int_{\partial \Om} \nu_x
\int_{0}^\infty
 \kl{\partial_r \M f} \kl{x, r }
 \ln \abs{r^2 - \sabs{x-x_0}^2}
 \,  \rmd r
\, \rmd s\kl{x}
\\
=
\frac{1}{\pi}  \;
\nabla_{x_0} \cdot
\int_{\partial \Om} \nu_x
\kl{
\int_{0}^\infty
 \frac{ \kl{r \M f} \kl{x, r }
}{r^2 - \abs{x-x_0}^2}
 \,  \rmd r
}
\rmd s\kl{x} \,,
\end{multline}
where  the  last integral is taken in the principal value
sense. The  last identity obviously coincides with the inversion formula
\req{inv-means-a}.

\subsection{Proof of  formula~\req{inv-means-b}}

The  second  inversion formula  \req{inv-means-b} for the
circular mean transform $\M$   could be obtained  from
\req{inv-wave-b} in way similar (but slightly more involved)
to the derivation of \req{inv-means-a} presented above.
However, it is simpler  to derive formula \req{inv-means-b} directly
 from \req{inv-means-a}.

One integration by  parts  and interchanging the order of differentiating and integration  yields
\begin{multline*}
	\frac{1}{\pi}
	\,
	\nabla_{x_0} \cdot
	\kl{
	\int_{\partial \Om}
	\nu_x
	\int_0^\infty
	\frac{r \M f\kl{x,r} }
	{ r^2 - \abs{x-x_0}^2 }
	 \, \rmd r
	 }
	 \rmd s\kl{x}
	  \\
	  \begin{aligned}
	&=
	-\frac{1}{2\pi} \,
\nabla_{x_0} \cdot
\int_{\partial \Om} \nu_x
\int_{0}^\infty
 \kl{\partial_r \M f} \kl{x, r }
 \ln \abs{r^2 - \sabs{x-x_0}^2}
 \, \rmd r
 \, \rmd s\kl{x}
\\
&=
	-
	\frac{1}{2\pi}
	\,
	\int_{\partial \Om}
	\nu_x \cdot
	 \int_{0}^\infty
	\nabla_{x_0} \kl{  \ln \abs{r^2 - \sabs{x-x_0}^2} }
	\kl{ \partial_r  \M f}
	\kl{x, r}
	\, \rmd r
	\, \rmd s\kl{x}
	\\&=
	\phantom{-}
	\frac{1}{\pi}
	\,\int_{\partial \Om}
	\nu_x\cdot
	\kl{x_0-x}
	\kl{
	\int_{0}^\infty
	\frac{ \kl{ \partial_r \M f}
	\kl{x, r} }
	{ r^2-\abs{x-x_0}^2 }
	\, \rmd r
	}
	\rmd s\kl{x}\,.
	\end{aligned}
\end{multline*}
In view of  \req{inv-means-a} this
verifies the formula  \req{inv-means-b}.

\section{Proof of Theorem~\ref{thm:ell}}
\label{sec:ell}

In this section we verify the exact reconstruction formulas
presented in  Theorem~\ref{thm:ell} in the case that the domain $\Omega$ is a disc or an
elliptical domain.
According to Theorems~\ref{thm:wave} and \ref{thm:means} it is sufficient
to show that
\begin{equation*}
\kl{ \partial_a^2\Ho_a \Ro \chi_\Om}
 \kl{\hat n\kl{x_1,x_0},
\hat a \kl{x_1,x_0}} = 0
\quad \text{ for all  } x_1, x_0 \in \Om \,,
\end{equation*}
where $\hat n\kl{x_1,x_0}$ and $\hat a \kl{x_1,x_0}$
are defined by \req{hatns}.

\subsection{Circular domains}
\label{sec:circulardomain}

We first consider the special case where $\Om = D$ is a disc in the plane and
we assume without loss of generality that $D = \set{x \in \R^2: \abs{x} =1}$ is the
unit disc centered at the origin.

The Radon transform of $\chi_D$ equals
$2 \sqrt{1-a^2} \;  \chi \mset{\sabs{a}^2 < 1}$.
The Hilbert transform of the function $\sqrt{1-a^2}  \; \chi
\mset{ \sabs{a}^2  <  1}$ is known  (see, for example,  \cite[Table 13.11]{Bra00b})
and yields
\begin{equation*}
\kl{\Ho_a  \chi_D } \kl{n , a}
=
2
\begin{cases}
- a - \sqrt{a^2-1}& \text{ if } a < -1 \\\
- a & \text{ if } -1 < s < 1  \\
- a  +  \sqrt{s^2-1}& \text{ if } a > 1
\end{cases} \,.
\end{equation*}
This in particular implies that $\kl{\partial_a^2 \Ho_a  \chi_B} \kl{n , a}
= 0 $ for $\abs{a} < 1$. Because we have  $\hat a\kl{x_1, x_0} < 1 $ for all
$x_1, x_0 \in D $, this implies Theorem \ref{thm:ell} for the case of
circular domains.

\subsection{Elliptical  domains}

Now suppose that  $\Om = E $ is an elliptical domain.
We may assume without loss of generality  that
\begin{equation*}
E = \set{ \kl{x, y} \in \R^2
:  x^2
+ \frac{y^2}{b^2} < 1}
\quad \text{ for some } b > 0 \,.
\end{equation*}
We then have  $\chi_\Om \kl{x,y}  = \chi_D \kl{x, y/b}$,
where $D$ is the unit disc as above. For
any integrable  function $\varphi \colon \R^2 \to \R$ and
any invertible matrix
$B \in \R^{2 \times 2}$ one can easily verify the identity
$\kl{\Ro\varphi^{B}} \kl{n , a}  = \frac{\det \skl{B}}{\sabs{B^Tn }}
\kl{\Ro\varphi} \kl{n/\sabs{B^Tn } , a/\sabs{B^Tn }}$
with $\varphi^{B} \kl{x}  \coloneqq \varphi \kl{B^{-1} x}$.
Consequently, after writing
$\theta\kl{\alpha} = \kl{\cos \alpha, \sin \alpha}$
we obtain
\begin{equation*}
\kl{\Ro  \chi_E} \kl{\theta\kl{\alpha}, a}
=
\frac{b \, \Ro
\chi_D \kl{\frac{ \theta\kl{\alpha}}{ \sqrt{\cos^2 \alpha  + b^2\sin^2 \alpha}}, \frac{a}{\sqrt{\cos^2 \alpha  + b^2\sin^2 \alpha}}}
}{\sqrt{\cos^2 \alpha  + b^2\sin^2 \alpha}}\,
\end{equation*}
and therefore
\begin{equation*}
\kl{\partial_a^2 \Ho_a \Ro  \chi_E}
 \kl{\theta\kl{\alpha}, a}
 =
\frac{b \, \skl{\partial_a^2\Ho_a \Ro  \chi_D }
\kl{\frac{ \theta \kl{\alpha}}{ \sqrt{\cos^2 \alpha  + b^2\sin^2 \alpha}}, \frac{a}{\sqrt{\cos^2 \alpha  + b^2\sin^2 \alpha}}}}{
\kl{\cos^2 \alpha  + b^2\sin^2 \alpha}^{3/2}}
   \,.
\end{equation*}

For $x_0,  x_1 \in \Om$  write $\hat n \kl{x_1, x_0}
= \theta \skl{\hat \alpha} $. Then  we have
$\hat a\kl{x_1, x_0} / \sqrt{\cos^2 \hat \alpha  + a^2\sin^2 \hat \alpha} < 1$.
From Subsection \ref{sec:circulardomain} we already know
that $\kl{\partial_a^2\Ho_a \Ro  \chi_D} \kl{n, a} =0$
for every $\abs{a} < 1$, and therefore obtain
\begin{equation*}
\kl{\partial_a^2 \Ho_a \Ro  \chi_E}
 \kl{\hat n \kl{x_1, x_0}, \hat a\kl{x_1, x_0} }
 = 0
 \quad \text{ for all } x_0 \neq x_1 \in \Om \,,
\end{equation*}
which  establishes  Theorem \ref{thm:ell} for the general
case of  elliptic domains.

\section{Conclusion}
\label{sec:conclusion}

In this paper we derived inversion formulas of the back-projection
type that recover the initial data  of the wave equation from
its solution on the boundary $\partial \Om$ of a convex domain
modulo the smooth term
\begin{equation*}
	\kl{\K_\Om f} \kl{x_0}
	=
	\frac{1}{8\pi}
	\int_{\Omega}
	f\kl{x_1} \, \frac{ \kl{ \partial_a^2 \Ho_a \Ro \chi_\Om} 	\kl{\hat n\kl{x_1,x_0},  \hat a\kl{x_1,x_0}}}
	{\abs{x_1-x_0}}  \, \rmd x_1
\end{equation*}
(see Theorem~\ref{thm:wave}).
In the case of circular and elliptical domains the operator $\K_\Om$  has been shown to vanish identically which yields   to exact reconstruction  formulas (see Theorem~\ref{thm:ell}, Item~\ref{it:ell1}) in these cases.  Corresponding statements have been  derived  for the inversion of the circular mean transform (see Theorem~\ref{thm:means} and   Theorem~\ref{thm:ell}, Item~\ref{it:ell2}).

We note  that all formulas derived in this paper can be implemented in a quite
straight forward manner following the derivations in
\cite{BurBauGruHalPal07,FinHalRak07}.
We do not give details here and refer the interested reader to the papers
\cite{BurBauGruHalPal07,FinHalRak07} for detailed derivations of discrete back-projection type
algorithms. A numerical reconstruction  based on the inversion formula
\req{ell-wave-a} on the  elliptical domain
$E = \sset{\kl{x,y}: x^2 + \kl{y/0.8}^2 < 1 }$ is shown in
Figure~\ref{fig:example}.
It can be seen that except for some smoothing effects at boundaries
(due to the numerical implementation),
the initial data is recovered almost perfectly.

\begin{psfrags}
\begin{figure}
\centering
\includegraphics[height=0.28\textwidth]{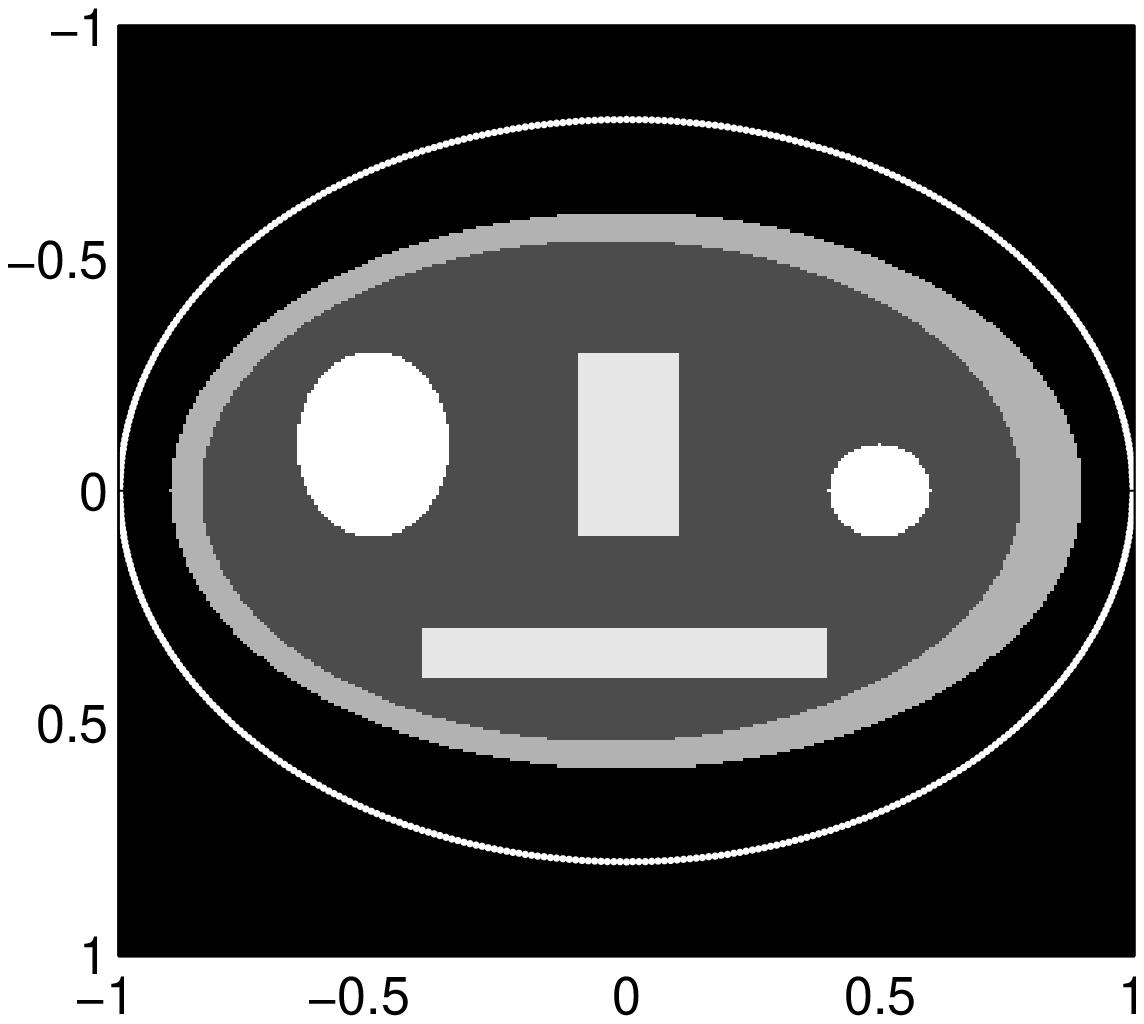}
\includegraphics[height=0.28\textwidth]{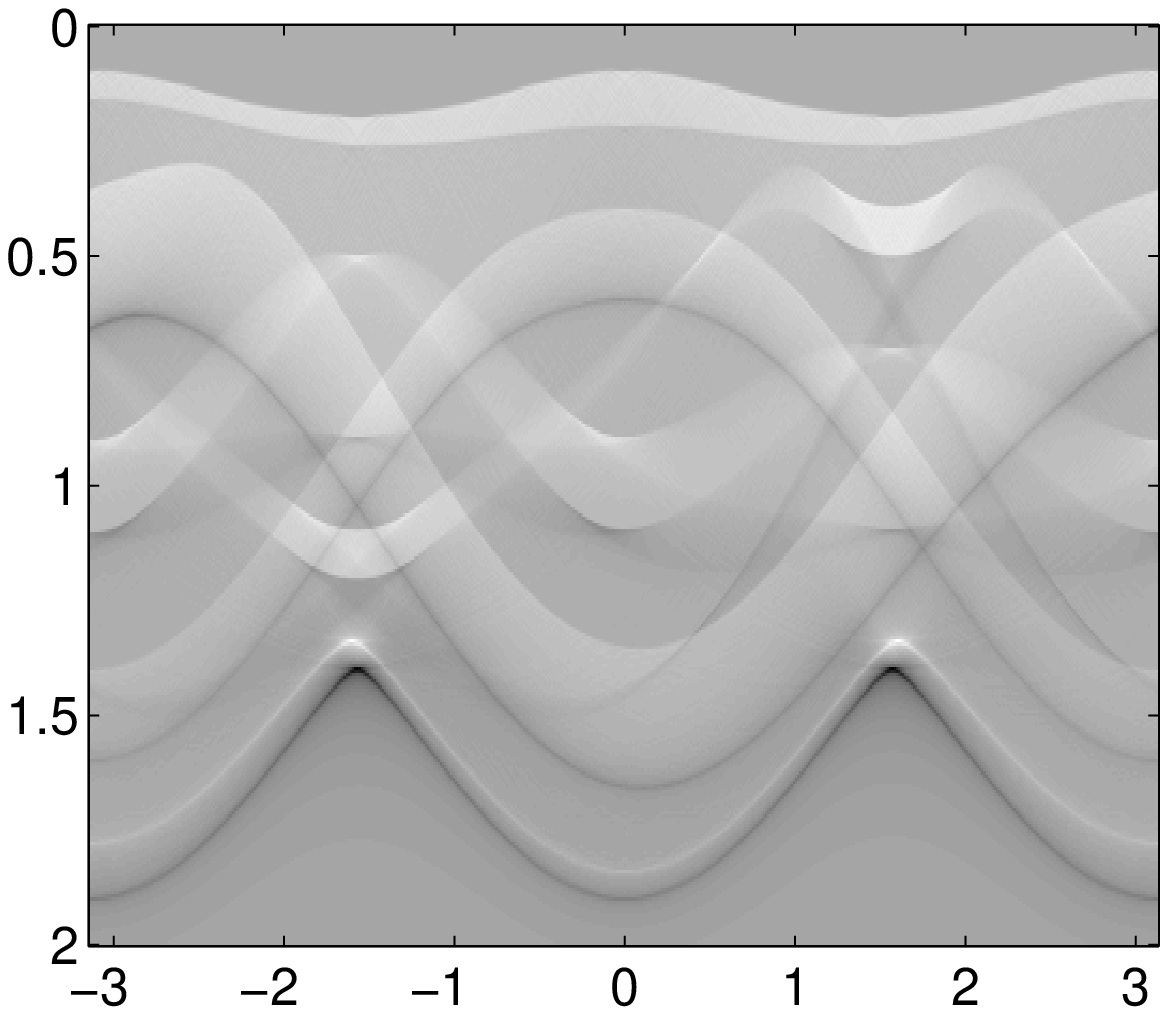}
\includegraphics[height=0.28\textwidth]{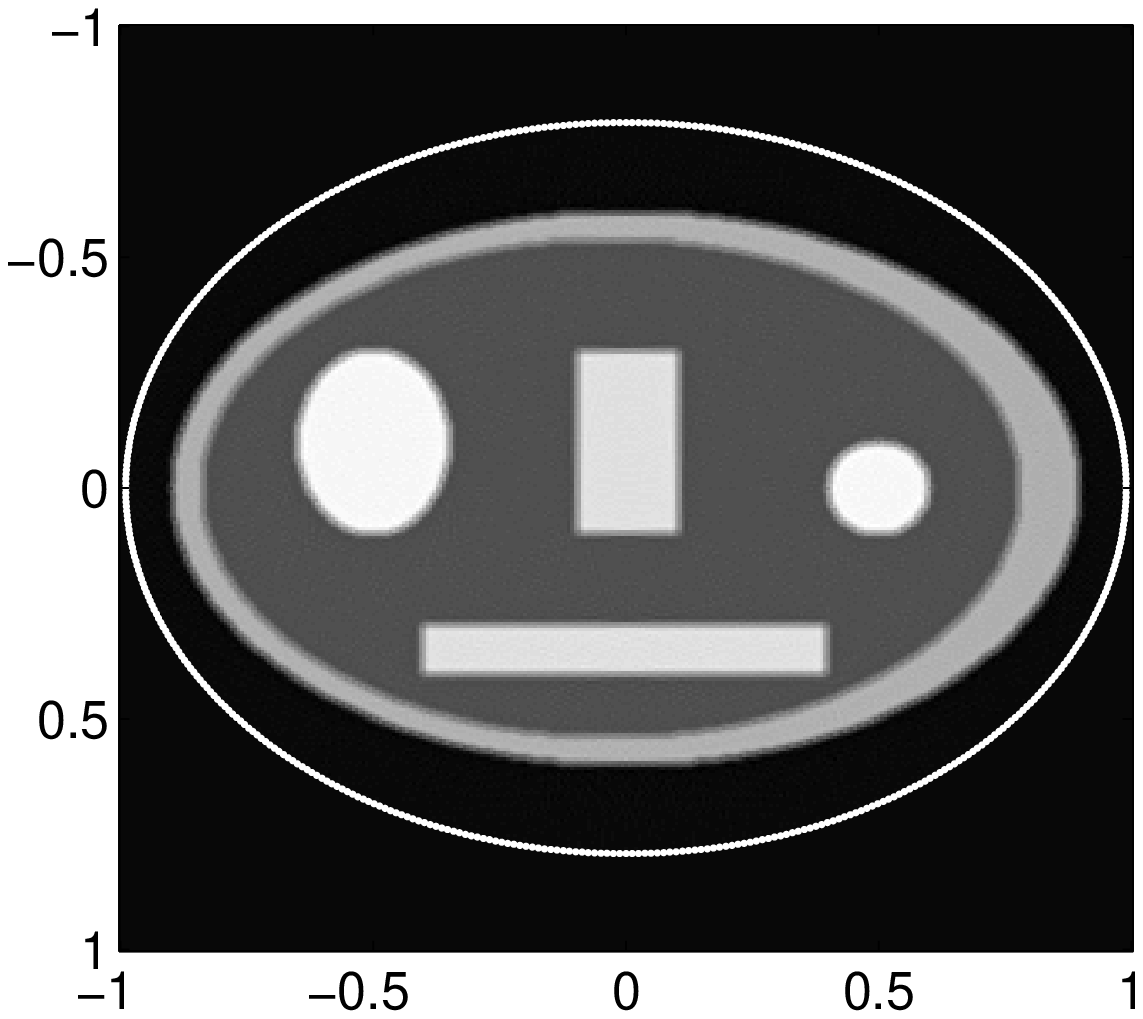}
\caption{Numerical reconstruction  using  inversion formula
\req{ell-wave-a} on the  elliptical domain
$E = \sset{\kl{x,y}: x^2 + \kl{y/0.8}^2 < 1 }$.
Left:  Initial data  $f$.
Middle: Simulated data $\Uo f$.
Right: Numerical reconstruction using inversion formula
\req{ell-wave-a}.
\label{fig:example}}
\end{figure}
\end{psfrags}

With $G$ denoting the diverging  fundamental solution of the two dimensional wave equation and with $u$ denoting the solution of
\req{wave}, our inversion operator
in  \req{inv-wave-a} and   \req{ell-wave-a} can be
written in the form
\begin{equation*}
 	f_{\rm UBP} \kl{x_0}
	=
	 2 \,  \nabla_{x_0}
	\cdot
	\int_{\partial\Om}
	\nu_x
	\int_{0}^\infty
	G\kl{x- x_0,t}
	u \kl{x,t}
	\rmd t
	\rmd s\kl{x}
	\,.
 \end{equation*}
The analog of this expression in three spatial  dimensions (where $G$ and $u$  are replaced by the three dimensional fundamental solution and  the solution of  the three dimensional wave equation, respectively) is the
so-called universal back-projection introduced in the context of photoacoustic tomography in \cite{XuWan05}. In this paper it has also been shown that  the universal  back-projection formula exactly recovers the initial data
of the three dimensional wave equation on spherical domains. This result has been extended
to ellipsoids  in $\R^3$ in \cite{Nat12}. In the latter paper, it is further shown that for any  smooth convex bounded domain $\Omega \subset \R^3$,
the universal back-projection formula recovers the initial data of the three dimensional wave equation modulo the term
\begin{equation*}
	\kl{\K_\Om^{\mathrm{(3D)}}  f} \kl{x_0}
	=
	-\frac{1}{16 \pi^2}
	\int_{\Omega}
	f\kl{x_1} \, \frac{ \kl{ \partial_a^3 \Ro \chi_\Om} 	\kl{\hat n\kl{x_1,x_0},  \hat a\kl{x_1,x_0}}}
	{\sabs{x_1-x_0}^2}  \, \rmd x_1 \,.
\end{equation*}
The identities  derived  in the current  paper demonstrate that
results similar to the ones of  \cite{Nat12}  also holds in two spatial
dimensions.
After initial submission of the present manuscript,
these results have actually been  generalized to arbitrary
dimension (see \cite{Hal12b}).


\def\cprime{$'$} \providecommand{\noopsort}[1]{}

\end{document}